   \DeclareMathOperator{\Id}{Id}
   \DeclareMathOperator{\e}{e}
   \newcommand{\N}{\ensuremath{\mathds N}}
   \newcommand{\Z}{\ensuremath{\mathds Z}}
   \newcommand{\R}{\ensuremath{\mathds R}}
   \newcommand{\cA}{\EuScript{A}}
   \newcommand{\hcA}{\widehat{\EuScript{A}}}
   \newcommand{\lb}{label}
   \newcommand{\lm}{leftmargin}
\newtheorem{theorem}{Theorem}
\newtheorem{lemma}[theorem]{Lemma}
\newtheorem{corollary}[theorem]{Corollary}
\newcommand{\eps}{\varepsilon}
\newcommand{\lbd}{\lambda}
\newcommand{\Omg}{\Omega}
\newcommand{\prts}[1]{\left(#1\right)}
\newcommand{\norm}[1]{\left\|#1\right\|}
\newcommand{\set}[1]{\left\{#1\right\}}
\newcommand{\maxs}[1]{\max\set{#1}}
\newcommand{\mins}[1]{\min\set{#1}}
\newcommand{\sups}[1]{\sup\set{#1}}
\newcommand{\pfrac}[2]{\prts{\dfrac{#1}{#2}}}
\newcommand{\dsum}{\displaystyle\sum}
\newcommand{\dsup}{\displaystyle\sup}
\renewcommand{\ge}{\geqslant}
\renewcommand{\le}{\leqslant}
\begin{document}
%-----------------------------------------------------------------------------%
\title[Robusteness of discrete nonuniform dichotomic behavior]
   {Robusteness of discrete nonuniform dichotomic behavior}
\author[Ant\'onio J. G. Bento]{Ant\'onio J. G. Bento}
\address{
   Ant\'onio J. G. Bento\\
   Departamento de Matem\'atica\\
   Universidade da Beira Interior\\
   6201-001 Covilh\~a\\
   Portugal}
\email{bento@ubi.pt}
\author{C\'esar M. Silva}
\address{
   C\'esar M. Silva\\
   Departamento de Matem\'atica\\
   Universidade da Beira Interior\\
   6201-001 Covilh\~a\\
   Portugal}
\email{csilva@ubi.pt}
\urladdr{www.mat.ubi.pt/~csilva}
\date{\today}
\subjclass[2010]{34D09, 37B55, 39A30}
\keywords{Robustness, nonautonomous difference equations,
   nonuniform dichotomies}
%-----------------------------------------------------------------------------%
\begin{abstract}
   For nonautonomous linear difference equations in Banach spaces we show that a very general type of dichotomic behavior persists under small enough additive linear perturbations. By using a new approach, we obtain two general robustness theorems that improve several results in the literature and also contain new situations. In particular, unlike several existent results for particular growth rates, we show that, up to a multiplicative constant, the dichotomic behavior for the perturbed equation is the same as the one for the original equation.
\end{abstract}
%-----------------------------------------------------------------------------%
\maketitle
%-----------------------------------------------------------------------------%
\section{Introduction}
%-----------------------------------------------------------------------------%
The notion of exponential dichotomy, that can be traced back to the work of Perron~\cite{Perron-MZ-1930} on the stability of ordinary differential equations, and of Li~\cite{Li-AM-1934} for discrete time systems, is a fundamental tool in the study of stability of difference and differential equations. Thus, the so-called robustness (or roughness) problem for these dichotomies, that is the problem of knowing if small additive linear perturbations of a linear system with some type of dichotomy still possesses the same type of dichotomy, is a fundamental problem. For nonautonomous linear differential equations this problem was already discussed in 1958 by Massera and Sch\"affer~\cite{Massera-Schaffer-AM-1958}. Several other authors, among which
Coppel~\cite{Coppel-JDE-1967}, Dalec'kii and Krein~\cite{Daleckii-Krein-TMM-1974}, Chow and Leiva~\cite{Chow-Leiva-JDE-1995}, Pliss and Sell~\cite{Pliss-Sell-JDDE-1999} and Popescu~\cite{Popescu-JMAA-2006} studied the problem of robustness under different assumptions and in different settings.

In spite of its usefulness in the nonautonomous setting, the notion of exponential dichotomy is sometimes restrictive and therefore more general dichotomic behavior have been considered. There is a large number os papers studying different aspects of the dynamics associated with nonuniform exponential dichotomies, a type of exponential dichotomic behavior where some exponential loss of hyperbolicity along the trajectories is allowed. For example, the problem of existence of nonuniform exponential dichotomies via admissibility properties was discussed by Preda and Megan~\cite{Preda-Megan-BAusMS-1983} and the existence of invariant stable manifolds for nonuniform exponential dichotomies was proved by Barreira and Valls in~\cite{Barreira-Valls-CMP-2005, Barreira-Valls-JDE-2006-221}.
Robustness results for nonuniform exponential dichotomies were obtained by Barreira and Valls~\cite{Barreira-Valls-JFA-2007, Barreira-Valls-JDE-2008} in the continuous time setting and by Barreira, Silva and Valls~\cite{Barreira-Silva-Valls-JDE-2009} for discrete time.

We remark that, unlike the autonomous measure-preserving situation where it is known that almost all linear variational equations obtained from a measure-preserving flow on a smooth Riemannian manifold admit a nonuniform exponential dichotomy with arbitrarily small nonuniformity, for general nonautonomous differential equations there is no reason to assume that nonuniform exponential dichotomies are typical. Recently, there were several papers dedicated to the study of the behavior of trajectories of linear perturbations of nonlinear nonautonomous systems, assuming that there is a nonuniform and nonexponential dichotomy for the linear part or assuming that the linear part belongs to a family that contains the exponential behavior as a special case. Several results were obtained for this nonuniform nonexponential dichotomies. For instance the existence of stable manifolds for perturbations of linear equations with nonuniform polynomial dichotomies and, more generally, for the so-called nonuniform $(\mu,\nu)$-dichotomies was discussed by Bento and Silva~\cite{Bento-Silva-JFA-2009,Bento-Silva-NATMA-2012} and the existence of polynomial trichotomies based on the values of some generalized “polynomial” Lyapunov exponents was established by Barreira and Valls~\cite{Barreira-Valls-NATMA-2010-72}. We should also mention that in the nineties, Naulin and Pinto~\cite{Naulin-Pinto-NATMA-1994, Naulin-Pinto-JDE-1995} obtained stability and robustness results for nonexponential dichotomies, although in the uniform setting. More recently, robustness results for nonuniform polynomial dichotomies were derived by Barreira, Fan, Valls and Zhang~\cite{Barreira-Fan-Valls-Zhang-TMNA-2011}, for nonuniform $(\mu,\nu)$-dichotomies by Chang, Zhang, and Qin~\cite{Chang-Zhang-Qin-JMAA-2012} and Chu~\cite{Chu-BSM-2013} and for nonuniform $(\mu,\nu)$-trichotomies by Jiang~\cite{Jiang-EJDE-2012}.

Though several nontrivial problems must be solved when one tries to prove the same type of result for distinct nonuniform growth rates, several common features can be found in the proofs. With this in mind the authors decided to propose a general setting where the above results may be obtained for the different types of nonuniform behavior all at the same time and undertook the project of obtaining several results in this general setting. The present paper is part of this project that already motivated the previous works~\cite{Bento-Silva-2012-arXiv1209.6589B, Bento-Silva-2012-arXiv1210.7740B}, dedicated to the existence of Lipschitz stable manifolds for (nonlinear and nonautonomous) perturbations of nonautonomous linear equations with this general nonuniform behavior. Our main result in this paper is a robustness (or roughness) result for our general dichotomic behaviors, i.e., a result that establishes that a very large class of dichotomic behaviours are persistent under small linear perturbations of the dynamics. More precisely, it is shown that if the equation
\begin{equation}\label{eq:x_m+1=A_m x_m, m in Z}
   x_{n+1} = A_n x_n, \ n \in \Z,
\end{equation}
where, for every $n \in \Z$, $A_n$ is a bounded linear operator from a Banach space $X$ into $X$,
has some general nonuniform dichotomic behaviour (in some sense to be clarified later) then the equation
\begin{equation}\label{eq:x_m+1=(A_m+B_m)x_m, m in Z}
   x_{n+1} = (A_n + B_n) x_n, \ n \in \Z
\end{equation}
admits also the same dichotomic behavior, where $B_n$ is a bounded linear operator from $X$ into $X$ and provided that $\|B_n\|$ is sufficiently small and decays to zero sufficiently fast as $n$ goes to infinity. Next, based on this result we derive a corresponding result in $\N$.

By considering this general setting we intend to achieve the following goals: firstly, we want to unify the several settings in the literature and consider a general situation that is sufficiently flexible to include a wide range of nonuniform behaviors; secondly, we want to highlight the common aspects of the arguments used in the proofs and to show that, in this general setting, there is no central role played by exponential or polynomial behavior and instead the same type of result holds for a very large family of prescribed behaviors for the dichotomy; finally we intend to use a different approach in the proof that allows our theorems not only to include new situations but also to improve existent results.

We emphasise that our proof is considerably different from the proofs of the robustness results that constituted our departing point. In fact, instead of using two fixed points to obtain the new dichotomy we use only one fixed point (see Lemma~\ref{lemma:U_m,n_&_V_m,n}) to get simultaneously the behavior in the subspaces associated to the family of projections and in the complementary subspaces. In particular, we have no need to compute bounds for the projections separately and this allowed us to obtain a dichotomic behavior for the perturbed equation that is asymptotically the same as the one assumed for the unperturbed equation. This contrasts with existent robustness results for nonuniform exponential dichotomies, nonuniform polynomial dichotomies and nonuniform $(\mu,\nu)$-dichotomies where the bounds for the nonuniform part of the dichotomy of the perturbed equation is asymptotically different from the bounds for the nonuniform part of the original one. In several corollaries we use our main results to obtain improved versions of the robustness theorems in~\cite{Barreira-Fan-Valls-Zhang-TMNA-2011, Barreira-Silva-Valls-JDE-2009, Barreira-Valls-DCDS-A-2012-32-12, Chu-BSM-2013} as well as some new cases.

The structure of the paper is the following: in Section~\ref{Section:Robustness:Z} and~\ref{Section:Robustness:N} we state the our robustness results, in $\Z$ and $\N$, respectively, and we apply them to several growth rates to improve some known results and obtain new ones and in Sections~\ref{section:proof:robustness:Z} and~\ref{section:proof:robustness:N} we prove our main results.
%-----------------------------------------------------------------------------%
\section{Robustness in $\Z$} \label{Section:Robustness:Z}
%-----------------------------------------------------------------------------%
Let $\Z$ be the set of integer numbers and define $\Z^2_\ge =\set{\prts{m,n} \in \Z^2 \colon m \ge n}$, $\Z^2_> = \set{\prts{m,n} \in \Z^2 \colon m > n}$, $\Z^2_\le = \Z^2 \setminus \Z^2_>$ and $\Z^2_< = \Z^2 \setminus \Z^2_\ge$.

Let $B(X)$ be the space of bounded linear operators in a Banach space~$X$. Given a sequence $(A_n)_{n \in \Z}$ of operators of $B(X)$, we write, for every $\prts{m,n} \in \Z^2_\ge$,
   $$ \cA_{m,n} =
      \begin{cases}
         A_{m-1} \cdots A_n & \text{ if } m>n, \\
         \text{Id} & \text{ if } m=n.\\
      \end{cases}$$

We say that the linear difference equation~\eqref{eq:x_m+1=A_m x_m, m in Z} admits an \textit{invariant splitting} if there exist bounded projections $P_n$, $n\in\Z$, such that, for every $(m,n) \in \Z_\ge^2$, we have
\begin{enumerate}[\lb=$($S$\arabic*)$,\lm=13mm]
   \item $ P_m \cA_{m,n} = \cA_{m,n} P_n$; \label{eq:(S1)}
   \item $\cA_{m,n}(\ker P_n) = \ker P_m$, where $\ker P_n$ and $\ker P_m$ are the null spaces of $P_n$ and $P_m$, respectively; \label{eq:(S2)}
   \item $\cA_{m,n}|_{\ker P_n} \colon \ker P_n \to \ker P_m$ is invertible in $B(X)$. \label{eq:(S3)}
\end{enumerate}
In these conditions, we denote by $\cA_{n,m}$ the inverse of $\cA_{m,n}|_{\ker P_n} \colon \ker P_n \to \ker P_m$, $\prts{m,n} \in \Z^2_\ge$, and we define, for each $n \in \Z$, the complementary projection $Q_n=\Id-P_n$ and the linear subspaces $E_n=P_n (X)$ and $F_n= \ker P_n = Q_n(X)$. As usual, we identify the vector spaces $E_n \times F_n$ and $E_n \oplus F_n$ as the same vector space.

Given double sequences $(a_{m,n})_{(m,n) \in \Z_\ge^2}$ and $(b_{m,n})_{(m,n) \in \Z_\le^2}$ we say that equation~\eqref{eq:x_m+1=A_m x_m, m in Z} admits a \textit{general dichotomy with bounds $\prts{a_{m,n}}$ and $\prts{b_{m,n}}$} if it admits an invariant splitting such that
\begin{enumerate}[\lb=$($D$\arabic*)$,\lm=13mm]
   \item $\|\cA_{m,n} P_n\| \le a_{m,n}$ for every $\prts{m,n} \in \Z^2_\ge$ and \label{eq:(D1)}
   \item $\|\cA_{m,n} Q_n\| \le b_{m,n}$ for every $\prts{m,n} \in \Z^2_\le$. \label{eq:(D2)}
\end{enumerate}

Given a sequence $\prts{B_n}_{n \in \Z} \subseteq B(X)$, we will consider the perturbed difference equation~\eqref{eq:x_m+1=(A_m+B_m)x_m, m in Z} and, for this equation, using the convention $\dsum_{k=n}^{n-1} \alpha_k=0$,  we define
\begin{equation}\label{def:lbd_m,n}
   \lbd_{m,n}
   = \dsum_{k=-\infty}^{n-1} a_{m,k+1} \|B_k\| b_{k,n}
      + \dsum_{k=n}^{m-1} a_{m,k+1} \|B_k\| a_{k,n}
      + \dsum_{k=m}^{+\infty} b_{m,k+1} \|B_k\| a_{k,n}
\end{equation}
for every $\prts{m,n} \in \Z^2_\ge$ and
\begin{equation}\label{def:mu_m,n}
   \mu_{m,n}
   = \dsum_{k=-\infty}^{m-1} a_{m,k+1} \|B_k\| b_{k,n}
      + \dsum_{k=m}^{n-1} b_{m,k+1} \|B_k\| b_{k,n}
      + \dsum_{k=n}^{+\infty} b_{m,k+1} \|B_k\| a_{k,n},
\end{equation}
for every $\prts{m,n} \in \Z^2_\le$ and we assume that
\begin{equation}\label{def:lbd+mu}
   \lbd
   = \dsup_{(m,n) \in \Z^2_\ge} \dfrac{\lbd_{m,n}}{a_{m,n}}
   < + \infty
   \ \ \ \text{ and } \ \ \
   \mu
   = \dsup_{(m,n) \in \Z^2_\le} \dfrac{\mu_{m,n}}{b_{m,n}}
   < + \infty.
\end{equation}

The main theorem of this section is the following:

\begin{theorem} \label{thm:robustness:Z}
   Suppose that equation~\eqref{eq:x_m+1=A_m x_m, m in Z} admits a generalized dichotomy with bounds $\prts{a_{m,n}}$ and $\prts{b_{m,n}}$  such that
   \begin{equation}\label{hyp:sup_m_a_m,n/a_m,j<infty}
      \sup_{m \ge j} \dfrac{a_{m,n}}{a_{m,j}} < +\infty
      \text{ for every } \prts{j,n} \in \Z^2_\ge
   \end{equation}
   and
   \begin{equation}\label{hyp:sup_m_b_m,n/b_m,j<infty}
      \sup_{m \le j} \dfrac{b_{m,n}}{b_{m,j}} < +\infty
      \text{ for every } \prts{j,n} \in \Z^2_\le
   \end{equation}
   and let $\prts{B_n}_{n \in \Z} \subset B(X)$ be a sequence of operators. If
   \begin{equation}\label{ine:max(lbd,mu)<1}
      \maxs{\lbd,\mu} < 1,
   \end{equation}
   where $\lbd$  and $\mu$ are defined by~\eqref{def:lbd+mu}, then equation~\eqref{eq:x_m+1=(A_m+B_m)x_m, m in Z} admits a generalized dichotomy with bounds $\prts{\sigma a_{m,n}}$ and $\prts{\sigma b_{m,n}}$,
   where $\sigma = 1/\prts{1-\maxs{\lbd,\mu}}$.
\end{theorem}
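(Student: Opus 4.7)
The plan is to run a Perron-type fixed point argument in a single product Banach space, so that \emph{one} fixed point simultaneously captures the stable and unstable evolutions of the perturbed equation. I would take $\cX$ to be the space of pairs $(U,V)$, where $U = (U_{m,n})_{(m,n) \in \Z^2_\ge}$ and $V = (V_{m,n})_{(m,n) \in \Z^2_\le}$ are families in $B(X)$, equipped with
\[
\|(U,V)\| = \maxs{\dsup_{(m,n) \in \Z^2_\ge} \frac{\|U_{m,n}\|}{a_{m,n}}, \ \dsup_{(m,n) \in \Z^2_\le} \frac{\|V_{m,n}\|}{b_{m,n}}}.
\]
Guided by the three-term decompositions of \eqref{def:lbd_m,n} and \eqref{def:mu_m,n}, I would define an operator $T$ on $\cX$ whose stable component is
\[
\begin{aligned}
(TU)_{m,n} = \cA_{m,n} P_n &+ \dsum_{k=-\infty}^{n-1} \cA_{m,k+1} P_{k+1} B_k V_{k,n} + \dsum_{k=n}^{m-1} \cA_{m,k+1} P_{k+1} B_k U_{k,n} \\
&- \dsum_{k=m}^{+\infty} \cA_{m,k+1} Q_{k+1} B_k U_{k,n},
\end{aligned}
\]
with an analogous formula for $(TV)_{m,n}$ obtained by replacing $\cA_{m,n} P_n$ with $\cA_{m,n} Q_n$ and splitting the three sums at $k=m$ and $k=n$, with signs chosen so that the telescoping check below succeeds. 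Term-by-term application of \ref{eq:(D1)}--\ref{eq:(D2)} gives $\|(TU)_{m,n}\| \le a_{m,n} + \lbd_{m,n}\|(U,V)\|$ and the analogous bound for $TV$; by \eqref{def:lbd+mu} the operator $T$ sends $\cX$ into itself and is $\maxs{\lbd,\mu}$-Lipschitz, and \eqref{ine:max(lbd,mu)<1} then yields a unique fixed point $(U,V)$ with $\|(U,V)\| \le \sigma$.

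To extract a dichotomy for the perturbed equation, I would difference $U = TU$ in $m$. The identities $A_m \cA_{m,k+1} = \cA_{m+1,k+1}$ on both $E_{k+1}$ and $F_{k+1}$, together with $\cA_{m+1,m+1} = \Id$, produce cancellations that leave only the diagonal contributions $P_{m+1} B_m U_{m,n}$ and $Q_{m+1} B_m U_{m,n}$, yielding the column recursions $U_{m+1,n} = (A_m + B_m) U_{m,n}$ and $V_{m+1,n} = (A_m + B_m) V_{m,n}$; in particular every column of $U$ and $V$ is a solution of \eqref{eq:x_m+1=(A_m+B_m)x_m, m in Z}. I would next set $\widetilde{P}_n := U_{n,n}$ and $\widetilde{Q}_n := V_{n,n}$ and apply uniqueness of the fixed point, this time to data reindexed at an intermediate time $\ell$, to establish the composition identities $U_{m,\ell} U_{\ell,n} = U_{m,n}$ and $V_{m,\ell} V_{\ell,n} = V_{m,n}$, together with $\widetilde{P}_n + \widetilde{Q}_n = \Id$ and $\widetilde{P}_n \widetilde{Q}_n = \widetilde{Q}_n \widetilde{P}_n = 0$. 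Specialization then shows that $\widetilde{P}_n$ is a bounded projection with complement $\widetilde{Q}_n$ and that $U_{m,n} = \widetilde{\cA}_{m,n} \widetilde{P}_n$, $V_{m,n} = \widetilde{\cA}_{m,n} \widetilde{Q}_n$, which is precisely the invariant splitting \ref{eq:(S1)}--\ref{eq:(S3)} for \eqref{eq:x_m+1=(A_m+B_m)x_m, m in Z}. The dichotomy bounds \ref{eq:(D1)} and \ref{eq:(D2)} with constants $\sigma a_{m,n}$ and $\sigma b_{m,n}$ are then just the statement $\|(U,V)\| \le \sigma$.

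The hard part will be the composition identities. The standard trick is to check that the reindexed family $m \mapsto U_{m,\ell} U_{\ell,n}$, together with the corresponding shifted $V$-family, satisfies the same fixed point equation as $(U,V)$ with base point moved from $n$ to $\ell$; uniqueness then forces equality. For this to apply, the reindexed object must lie in $\cX$, which is exactly where hypotheses \eqref{hyp:sup_m_a_m,n/a_m,j<infty} and \eqref{hyp:sup_m_b_m,n/b_m,j<infty} are used: they bound the ratios $a_{m,n}/a_{m,j}$ and $b_{m,n}/b_{m,j}$ uniformly in $m$, ensuring that shifting the base point only changes the weighted norms by a bounded factor. Once this technicality is cleared, the projection properties of $\widetilde{P}_n$ and $\widetilde{Q}_n$, the identification of $U_{m,n}$ and $V_{m,n}$ with the perturbed evolution restricted to the new subspaces, and the asymptotically-same dichotomy bounds all follow by routine specialization.
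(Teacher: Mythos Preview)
Your strategy is essentially the paper's: a single Banach fixed point yielding $(U,V)$ with the weighted bound $\|(U,V)\|\le\sigma$, the column recursions $U_{m+1,n}=(A_m+B_m)U_{m,n}$ and $V_{m+1,n}=(A_m+B_m)V_{m,n}$, and composition identities proved by uniqueness at a shifted base point (this is exactly where \eqref{hyp:sup_m_a_m,n/a_m,j<infty}--\eqref{hyp:sup_m_b_m,n/b_m,j<infty} enter). The paper runs the fixed point on a per-$n$ space $\Omg_n$ rather than your global $\cX$, but that is cosmetic.

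There is, however, a genuine gap. The identities you list --- $U_{m,\ell}U_{\ell,n}=U_{m,n}$, $V_{m,\ell}V_{\ell,n}=V_{m,n}$, $\widetilde P_n\widetilde Q_n=\widetilde Q_n\widetilde P_n=0$ --- are \emph{not} enough to verify \ref{eq:(S1)}--\ref{eq:(S3)} for the perturbed equation, and your sentence ``$V_{m,n}=\widetilde\cA_{m,n}\widetilde Q_n$'' is circular: $\widetilde\cA_{m,n}$ for $m<n$ is only defined \emph{after} (S3) is established. Concretely, from the column recursion you get $\hcA_{n,m}V_{m,n}x=\widetilde Q_n x$ for $x\in\widehat F_n$, so $\hcA_{n,m}|_{\widehat F_m}$ is surjective onto $\widehat F_n$; but nothing in your list gives injectivity, nor the forward invariance $\hcA_{m,n}(\widehat F_n)\subseteq\widehat F_m$, nor the commutation $\widetilde P_m\hcA_{m,n}=\hcA_{m,n}\widetilde P_n$. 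For the latter you need $U_{m,m}\hcA_{m,n}V_{n,n}=0$, which does \emph{not} reduce to $\widetilde P\,\widetilde Q=0$ because $\hcA_{m,n}$ is wedged in between.

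The paper fills this gap with an additional fixed-point lemma of a different flavor (Lemma~\ref{lemma:U_m,j_hcA_j,n_V_n,n_=_0}): for $(j,n)\in\Z^2_\ge$ one shows that the pair
\[
   W_{m,j}=U_{m,j}\hcA_{j,n}V_{n,n}\ (m\ge j),\qquad
   Z_{m,j}=V_{m,j}\hcA_{j,n}V_{n,n}-\begin{cases}\hcA_{m,n}V_{n,n}&n\le m\le j\\ V_{m,n}&m\le n\end{cases}
\]
is a fixed point of the linear contraction $T_j$, hence zero. This yields $U_{m,m}\hcA_{m,n}V_{n,n}=0$ (giving (S1) and the inclusion $\hcA_{m,n}(\widehat F_n)\subseteq\widehat F_m$) and $V_{n,m}\hcA_{m,n}V_{n,n}=V_{n,n}$ (giving the left inverse, hence injectivity of $\hcA_{m,n}|_{\widehat F_n}$). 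The paper even adds a direct injectivity argument (Lemma~\ref{lemma:invertibility}). You should add this step; the rest of your plan is sound, up to the sign in the first sum of $(TU)_{m,n}$, which must be $-$ so that $U_{n,n}+V_{n,n}=\Id$.
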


The proof of this theorem will be given in Section~\ref{section:proof:robustness:Z}.

Now we are going to apply Theorem~\ref{thm:robustness:Z} to general dichotomies with bounds of the form
\begin{equation}\label{a_m,n=a_n/a_m c_n...}
   a_{m,n} = \dfrac{a_n}{a_m} c_n,
         \ \ \ \text{ and } \ \ \
         b_{m,n} = \dfrac{b_n}{b_m} c_n,
\end{equation}
where $\prts{a_n}_{n \in \Z}$, $\prts{b_n}_{n \in \Z}$ and $\prts{c_n}_{n \in \Z}$ are sequences of positive numbers. We will refer to the sequences $(a_n/a_m)$ and $(b_n/b_m)$ as the \textit{uniform bounds of the dichotomy} and the sequence $(c_n)$ as the \textit{nonuniform bound of the dichotomy}.

\begin{corollary} \label{cor:robustness:Z:a_m,n=a_n/b_n c_n}
   Suppose that equation~\eqref{eq:x_m+1=A_m x_m, m in Z} admits a dichotomy with bounds of the form~\eqref{a_m,n=a_n/a_m c_n...} and
   \begin{equation}\label{hyp:a_m b_n/(a_n b_m) <= C:in Z}
      \dfrac{a_m b_n}{a_n b_m} \le C
         \text{ for every } \prts{m,n} \in \Z^2_\le
   \end{equation}
   for some positive $C$. Let $\prts{B_n}_{n \in \Z} \subset B(X)$ be a sequence of bounded linear operators such that
      $$ \|B_n\| \le \dfrac{a_n}{a_{n+1} c_{n+1}} \beta_n
         \text{ for every } n \in \Z,$$
   where $(\beta_n)_{n \in \N}$ is a sequence of positive numbers such that $\beta:=\dsum_{k=-\infty}^{+\infty} \beta_k < +\infty$. If $\beta C < 1$, then~\eqref{eq:x_m+1=(A_m+B_m)x_m, m in Z} admits a dichotomy with bounds obtained by multiplying the bounds in~\eqref{a_m,n=a_n/a_m c_n...} by $1/(1-\beta C)$.
\end{corollary}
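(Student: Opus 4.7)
The plan is to show that, under the hypotheses of the corollary, the more general Theorem~\ref{thm:robustness:Z} applies with $\max\{\lbd,\mu\}\le\beta C$, so that the output dichotomy bounds are $\sigma a_{m,n}$ and $\sigma b_{m,n}$ with $\sigma\le 1/(1-\beta C)$.

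First I would verify the two auxiliary hypotheses~\eqref{hyp:sup_m_a_m,n/a_m,j<infty} and~\eqref{hyp:sup_m_b_m,n/b_m,j<infty}. With $a_{m,n}=(a_n/a_m)c_n$ and $b_{m,n}=(b_n/b_m)c_n$ the ratios $a_{m,n}/a_{m,j}=a_nc_n/(a_jc_j)$ and $b_{m,n}/b_{m,j}=b_nc_n/(b_jc_j)$ are independent of~$m$, so both suprema are finite. Note also that taking $m=n$ in the standing hypothesis~\eqref{hyp:a_m b_n/(a_n b_m) <= C:in Z} gives $1\le C$, a fact I will use freely.

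The bulk of the work is a direct calculation of $\lbd_{m,n}/a_{m,n}$ and $\mu_{m,n}/b_{m,n}$. The key observation is that the perturbation bound combines with the explicit form of $a_{m,n}$ and $b_{m,n}$ in a telescoping way:
\begin{equation*}
   a_{m,k+1}\|B_k\|\le\frac{a_k}{a_m}\,\beta_k,
   \qquad
   b_{m,k+1}\|B_k\|\le\frac{b_{k+1}\,a_k}{b_m\,a_{k+1}}\,\beta_k.
\end{equation*}
Plugging these into~\eqref{def:lbd_m,n}, multiplying $a_{k,n}=(a_n/a_k)c_n$ or $b_{k,n}=(b_n/b_k)c_n$ as appropriate, and dividing by $a_{m,n}=(a_nc_n)/a_m$, each of the three sums in $\lbd_{m,n}/a_{m,n}$ collapses to a sum of the form $\sum (a_p b_q)/(a_q b_p)\,\beta_k$ with $(p,q)\in\Z^2_{\le}$ (or, in the middle sum, simply to $\sum\beta_k$). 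The hypothesis~\eqref{hyp:a_m b_n/(a_n b_m) <= C:in Z} then bounds each such factor by~$C$, and using $C\ge 1$ for the middle sum we obtain $\lbd_{m,n}/a_{m,n}\le C\sum_{k\in\Z}\beta_k=\beta C$. An entirely analogous calculation for $\mu_{m,n}/b_{m,n}$, where all three sums now produce ratios indexed in $\Z^2_{\le}$ (so the factor $C$ appears in each), yields $\mu\le\beta C$.

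The main (and only real) obstacle is bookkeeping: correctly tracking which index pair $(p,q)$ arises in each of the six sums so as to apply~\eqref{hyp:a_m b_n/(a_n b_m) <= C:in Z} with $(p,q)\in\Z^2_{\le}$. Once this is done, $\max\{\lbd,\mu\}\le\beta C<1$ by hypothesis, and Theorem~\ref{thm:robustness:Z} gives a generalized dichotomy for~\eqref{eq:x_m+1=(A_m+B_m)x_m, m in Z} with bounds $\sigma a_{m,n}$ and $\sigma b_{m,n}$, where $\sigma=1/(1-\max\{\lbd,\mu\})\le 1/(1-\beta C)$, as required.
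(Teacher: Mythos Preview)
Your proposal is correct and follows essentially the same route as the paper: verify~\eqref{hyp:sup_m_a_m,n/a_m,j<infty}--\eqref{hyp:sup_m_b_m,n/b_m,j<infty} by noting the ratios are independent of~$m$, observe $C\ge 1$ from $m=n$, then substitute the explicit form of $a_{m,n}$, $b_{m,n}$ and the bound on $\|B_k\|$ into~\eqref{def:lbd_m,n}--\eqref{def:mu_m,n} to obtain $\max\{\lbd,\mu\}\le\beta C$, and finish by Theorem~\ref{thm:robustness:Z}. The only cosmetic difference is that you divide by $a_{m,n}$ (resp.\ $b_{m,n}$) before estimating each sum, whereas the paper bounds $\lbd_{m,n}$ and $\mu_{m,n}$ directly; the six index checks you flag as ``bookkeeping'' are exactly those implicit in the paper's displayed computation.
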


\begin{proof}
   Clearly, this type of bounds satisfy hypothesis~\eqref{hyp:sup_m_a_m,n/a_m,j<infty} and~\eqref{hyp:sup_m_b_m,n/b_m,j<infty}. Moreover, since from~\eqref{hyp:a_m b_n/(a_n b_m) <= C:in Z} we have $C \ge 1$, we conclude that
   \begin{align*}
      \lbd_{m,n}
      & = \dsum_{k=-\infty}^{n-1} a_{m,k+1} \|B_k\| b_{k,n}
         + \dsum_{k=n}^{m-1} a_{m,k+1} \|B_k\| a_{k,n}
         + \dsum_{k=m}^{+\infty} b_{m,k+1} \|B_k\| a_{k,n}\\
      & \le \dsum_{k=-\infty}^{n-1} \dfrac{a_{k+1}}{a_m} c_{k+1}
            \dfrac{a_k}{a_{k+1} c_{k+1}} \beta_k \dfrac{b_n}{b_k} c_n
         + \dsum_{k=n}^{m-1} \dfrac{a_{k+1}}{a_m} c_{k+1}
            \dfrac{a_k}{a_{k+1} c_{k+1}} \beta_k \dfrac{a_n}{a_k} c_n\\
      & \qquad +\dsum_{k=m}^{+\infty} \dfrac{b_{k+1}}{b_m} c_{k+1}
         \dfrac{a_k}{a_{k+1} c_{k+1}} \beta_k \dfrac{a_n}{a_k} c_n\\
      & \le \dfrac{a_n}{a_m} c_n \beta C
   \end{align*}
   and
   \begin{align*}
      \mu_{m,n}
      & = \dsum_{k=-\infty}^{m-1} a_{m,k+1} \|B_k\| b_{k,n}
         + \dsum_{k=m}^{n-1} b_{m,k+1} \|B_k\| b_{k,n}
         + \dsum_{k=n}^{+\infty} b_{m,k+1} \|B_k\| a_{k,n}\\
      & \le \dsum_{k=-\infty}^{m-1} \dfrac{a_{k+1}}{a_m} c_{k+1}
            \dfrac{a_k}{a_{k+1} c_{k+1}} \beta_k \dfrac{b_n}{b_k} c_n
         + \dsum_{k=m}^{n-1} \dfrac{b_{k+1}}{b_m} c_{k+1}
            \dfrac{a_k}{a_{k+1} c_{k+1}} \beta_k \dfrac{b_n}{b_k} c_n\\
      & \qquad +\dsum_{k=n}^{+\infty} \dfrac{b_{k+1}}{b_m} c_{k+1}
         \dfrac{a_k}{a_{k+1} c_{k+1}} \beta_k \dfrac{a_n}{a_k} c_n\\
      & \le \dfrac{b_n}{b_m} c_n \beta C,
   \end{align*}
   and thus
      $$ \maxs{\lbd,\mu} \le \beta C.$$
   Hence, if $\beta C < 1$, all the hypothesis in Theorem~\ref{thm:robustness:Z} are satisfied and the result follows immediately.
\end{proof}

Now we will apply our result to nonuniform exponential dichotomies, i.e., dichotomies with bounds of the form
\begin{equation}\label{a_m,n=e^(a(m-n)+eps|n|)...}
   a_{m,n} = D\e^{a(m-n)+\eps |n|}
   \ \ \ \text{ and } \ \ \
   b_{m,n} = D \e^{b(m-n) + \eps |n|},
\end{equation}
where $D \ge 1$, $\eps \ge 0$ and $a,b \in \R$.

\begin{corollary}\label{cor:bonustness:Z:exponential}
   Suppose that equation~\eqref{eq:x_m+1=A_m x_m, m in Z} has a nonuniform exponential dichotomy, i.e, admits a general dichotomy with bounds given by~\eqref{a_m,n=e^(a(m-n)+eps|n|)...} and let $B_n \colon X \to X$, $n \in \Z$, be bounded linear perturbations such that
   \begin{equation*}
      \|B_n\| \le \delta \e^{-\gamma |n+1|}.
   \end{equation*}
   for some $\delta > 0$ and some $\gamma \in \R$. If
      $$ a \le b, \ \ \
         \eps - \gamma < 0 \ \ \ \text{ and } \ \ \
         \theta:= D \delta \e^{-a} (\e^{\eps-\gamma}+1) \prts{1-\e^{\eps-\gamma}}^{-1}
            < 1,$$
   then equation~\eqref{eq:x_m+1=(A_m+B_m)x_m, m in Z} admits a nonuniform exponential dichotomy with bounds of the form~\eqref{a_m,n=e^(a(m-n)+eps|n|)...} with $D$ replaced by $D/(1-\theta)$.
\end{corollary}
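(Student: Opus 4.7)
The plan is to obtain this corollary as a direct application of Corollary~\ref{cor:robustness:Z:a_m,n=a_n/b_n c_n}. The first step is to write the nonuniform exponential bounds~\eqref{a_m,n=e^(a(m-n)+eps|n|)...} in the product form~\eqref{a_m,n=a_n/a_m c_n...}; the natural choice is
$$a_n = \e^{-an}, \qquad b_n = \e^{-bn}, \qquad c_n = D\e^{\eps|n|},$$
which recovers simultaneously the exponential and nonuniform factors in both $a_{m,n}$ and $b_{m,n}$. With this identification, the hypothesis~\eqref{hyp:a_m b_n/(a_n b_m) <= C:in Z} becomes the requirement that $\e^{(a-b)(n-m)} \le C$ for all $(m,n) \in \Z^2_\le$; the assumption $a \le b$ makes this trivially true with $C = 1$.

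Next I would construct the sequence $\prts{\beta_n}$ required by that corollary. A direct computation yields $\dfrac{a_n}{a_{n+1} c_{n+1}} = \dfrac{\e^a}{D\e^{\eps|n+1|}}$, so the hypothesis $\|B_n\| \le \delta\e^{-\gamma|n+1|}$ is absorbed by the choice
$$\beta_n = D\,\delta\,\e^{-a}\,\e^{(\eps-\gamma)|n+1|}.$$
Summing via the shift $j = k+1$ and the standard two-sided geometric series, the sign condition $\eps - \gamma < 0$ is exactly what guarantees convergence and gives
$$\beta = \dsum_{k=-\infty}^{+\infty} \beta_k = D\,\delta\,\e^{-a}\,\dfrac{1+\e^{\eps-\gamma}}{1-\e^{\eps-\gamma}} = \theta.$$

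Since $C = 1$, the smallness condition $\beta C < 1$ of Corollary~\ref{cor:robustness:Z:a_m,n=a_n/b_n c_n} is precisely $\theta < 1$, and its conclusion delivers a generalized dichotomy for~\eqref{eq:x_m+1=(A_m+B_m)x_m, m in Z} with bounds equal to $1/(1-\theta)$ times the original ones. Re-expressing these multiplied bounds in the form~\eqref{a_m,n=e^(a(m-n)+eps|n|)...} amounts to replacing $D$ by $D/(1-\theta)$, which is exactly the desired statement. There is no genuine obstacle: the only points requiring care are the correct splitting of the factor $D\e^{\eps|n|}$ into $c_n$, the verification that the sign hypothesis $\eps-\gamma<0$ is what makes the bilateral geometric series converge, and the algebraic identification of the resulting $\beta$ with the constant $\theta$ stated in the corollary.
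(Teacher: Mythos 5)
Your proposal is correct and matches the paper's own proof: the same choices $a_n=\e^{-an}$, $b_n=\e^{-bn}$, $c_n=D\e^{\eps|n|}$, $\beta_n=D\delta\e^{-a}\e^{(\eps-\gamma)|n+1|}$, the same check that $a\le b$ gives $C=1$, and the same two-sided geometric series identifying $\beta$ with $\theta$.
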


\begin{proof}
   Let
      $$ a_n = \e^{-an}, \ \ \
         b_n = \e^{-bn}, \ \ \
         c_n = D \e^{\eps |n|} \ \ \ \text{ and } \ \ \
         \beta_n = D \delta \e^{-a} \e^{-(\gamma-\eps) |n+1|}.$$
   Then
      $$ \dfrac{a_n}{a_m} c_n = D\e^{a(m-n)+\eps |n|}, \ \ \
         \dfrac{b_n}{b_m} c_n = D\e^{b(m-n)+\eps |n|} \ \ \ \text{ and } \ \ \
         \dfrac{a_n}{a_{n+1} c_{n+1}} \beta_n = \delta \e^{-\gamma |n+1|}$$
   Moreover, since $a \le b$,~condition~\eqref{hyp:a_m b_n/(a_n b_m) <= C:in Z} is satisfied with $C=1$. Therefore, applying Corollary~\ref{cor:robustness:Z:a_m,n=a_n/b_n c_n} and using the fact that
      $$ \beta
         = \dsum_{k=-\infty}^{+\infty} \beta_k
         = \dfrac{D\delta\prts{\e^{\eps-\gamma}+1}}
            {\e^a \prts{1-\e^{\eps-\gamma}}},$$
   the result follows immediately.
\end{proof}

Corollary~\ref{cor:bonustness:Z:exponential} is an improvement of Theorem 8 of Barreira, Silva and Valls~\cite{Barreira-Silva-Valls-JDE-2009} and of Theorem 1 of Barreira and Valls~\cite{Barreira-Valls-DCDS-A-2012-32-12}. In~\cite{Barreira-Silva-Valls-JDE-2009} in order to obtain Theorem 8 the authors work with invertible cocycles, impose the three additional hypotheses
   $$ a=-b <0, \ \ \
      \eps < -a
      \ \ \ \text{ and } \ \ \
      \gamma = -2 \eps
      $$
and in the bounds of the dichotomy of the perturbed equation they have to replace $\eps$ by $2\eps$ and $a$ by $ \tilde a$, with $\tilde a > a$. In~\cite{Barreira-Valls-DCDS-A-2012-32-12} the authors proved the result for noninvertible cocycles, but used another three hypotheses
   $$ a=-b <0, \ \ \
      \eps < -2a
      \ \ \ \text{ and } \ \ \
      \gamma = -2 \eps$$
and the nonuniform bounds of the dichotomy of the perturbed equation have exponent $2\eps$ instead of $\eps$.

We can also obtain a new result considering polynomial growth rates similar to the ones in~\cite{Bento-Silva-JFA-2009}.

\begin{corollary}\label{cor:robustness:Z:a_m,n=D(m-n+1)^a (|n|+1)^eps...}
   Suppose that equation~\eqref{eq:x_m+1=A_m x_m, m in Z} has a nonuniform polynomial dichotomy, i.e, admits a general dichotomy with bounds given by
   \begin{equation}\label{a_m,n=D(m-n+1)^a (|n|+1)^eps...}
      a_{m,n} = D \prts{m-n+1}^a \prts{|n|+1}^\eps
      \ \ \ \text{ and } \ \ \
      b_{m,n} = D \prts{n-m+1}^b \prts{|n|+1}^\eps,
   \end{equation}
   with $D \ge 1$, $\eps \ge 0$ and $a,b \in \R$. Let $B_n \colon X \to X$, $n \in \Z$, be bounded linear perturbations such that
      $$ \|B_n\| \le \delta \prts{|n+1|+1}^{-\gamma}.$$
   for some $\delta > 0$ and some $\gamma \in \R$. If
      $$ a \le b \le 0, \ \ \
         \eps - \gamma < -1 \ \ \ \text{ and } \ \ \
         \theta:=2^{-a} D \delta \prts{2\zeta_{\gamma-\eps}-1} < 1,$$
   where $\zeta_{\gamma-\eps} := \dsum_{k=1}^{+\infty} k^{\eps-\gamma}$, then equation~\eqref{eq:x_m+1=(A_m+B_m)x_m, m in Z} admits a nonuniform polynomial dichotomy with bounds given by~\eqref{a_m,n=D(m-n+1)^a (|n|+1)^eps...} with $D$ replaced by $D/(1-\theta)$.
\end{corollary}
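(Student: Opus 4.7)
The plan is to apply Theorem~\ref{thm:robustness:Z} directly, rather than Corollary~\ref{cor:robustness:Z:a_m,n=a_n/b_n c_n}, because the polynomial factor $(m-n+1)^a$ cannot be written in the separated form $a_n/a_m$ demanded by that corollary. I would first verify that the bounds~\eqref{a_m,n=D(m-n+1)^a (|n|+1)^eps...} satisfy the auxiliary hypotheses~\eqref{hyp:sup_m_a_m,n/a_m,j<infty} and~\eqref{hyp:sup_m_b_m,n/b_m,j<infty}: indeed, $a_{m,n}/a_{m,j} = \prts{(m-n+1)/(m-j+1)}^a \prts{(|n|+1)/(|j|+1)}^\eps$, which is bounded in $m \ge j$ for fixed $(j,n)$, and analogously for~$b$.

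The heart of the argument is to show $\maxs{\lbd,\mu} \le \theta$. Splitting $\lbd_{m,n}$ as in~\eqref{def:lbd_m,n}, I would bound each of the polynomial products $(m-k)^a(n-k+1)^b$, $(m-k)^a(k-n+1)^a$, and $(k-m+2)^b(k-n+1)^a$ by a constant multiple of $(m-n+1)^a$, exploiting $a \le b \le 0$. The outer two sums are straightforward: for $k \le n-1$ one uses $(m-k)^a \le (m-n+1)^a$ and $(n-k+1)^b \le 1$, and symmetrically for $k \ge m$ one uses $(k-n+1)^a \le (m-n+1)^a$ and $(k-m+2)^b \le 1$. The middle sum is the main obstacle: I would apply the endpoint-minimum bound $(m-k)(k-n+1) \ge m-n$ valid for $n \le k \le m-1$, combined with $m-n+1 \le 2(m-n)$, to obtain $(m-k)^a(k-n+1)^a \le 2^{-a}(m-n+1)^a$, which is precisely what introduces the factor $2^{-a}$.

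After factoring $a_{m,n} = D(m-n+1)^a(|n|+1)^\eps$ out of each sum and absorbing the common $2^{-a}\ge 1$ across all three ranges, one is left with
\[
   \lbd_{m,n} \le 2^{-a}D\delta\, a_{m,n}\dsum_{k=-\infty}^{+\infty}(|k+1|+1)^{\eps-\gamma}.
\]
Reindexing $j=k+1$ and using the symmetry of $|j|$, the remaining series equals $1+2\dsum_{j=2}^{+\infty}j^{\eps-\gamma} = 2\zeta_{\gamma-\eps}-1$, convergent precisely because $\eps-\gamma<-1$. An entirely analogous computation applied to~\eqref{def:mu_m,n} yields $\mu_{m,n} \le \theta\, b_{m,n}$. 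Hence $\maxs{\lbd,\mu}\le\theta<1$, and Theorem~\ref{thm:robustness:Z} delivers a general dichotomy for~\eqref{eq:x_m+1=(A_m+B_m)x_m, m in Z} with bounds $\sigma a_{m,n}$ and $\sigma b_{m,n}$ for $\sigma = 1/(1-\maxs{\lbd,\mu}) \le 1/(1-\theta)$, i.e., with $D$ replaced by $D/(1-\theta)$.

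The principal technical obstacle is precisely the middle-sum estimate just described: unlike the exponential case, where $\e^{a(m-k)}\e^{a(k-n)}$ telescopes exactly into $\e^{a(m-n)}$, here the product $(m-k)^a(k-n+1)^a$ is strictly larger than $(m-n+1)^a$, and showing that the loss is controlled by the clean constant $2^{-a}$ is what makes the final $\theta$ take exactly the form stated in the corollary.
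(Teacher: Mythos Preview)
Your proposal is correct and follows essentially the same approach as the paper's proof: both apply Theorem~\ref{thm:robustness:Z} directly (not Corollary~\ref{cor:robustness:Z:a_m,n=a_n/b_n c_n}), use the same three range-dependent polynomial inequalities---in particular the key middle-sum estimate $(m-k)(k-n+1)\ge m-n$ combined with $m-n+1\le 2(m-n)$ to produce the factor $2^{-a}$---and evaluate the remaining series as $2\zeta_{\gamma-\eps}-1$. The only cosmetic difference is that the paper records the sharper bound $\mu_{m,n}\le D\delta(2\zeta_{\gamma-\eps}-1)\,b_{m,n}$ (no $2^{-a}$) before absorbing it into $\theta$, whereas you state $\mu_{m,n}\le\theta\,b_{m,n}$ directly; since $2^{-a}\ge 1$ this is harmless.
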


\begin{proof}
   Since $ a \le b \le 0$, for every integers $k < n \le m$ we have
      $$ (m-k)^a \le (m-n-1)^a \ \ \ \text{ and } \ \ \ (n-k+1)^b \le 1,$$
   for every integers $n \le k < m$ we have
      $$ (m-k)^a (k-n+1)^a \le (m-n)^a \le 2^{-a} (m-n+1)^a$$
   and for every integers $n \le m \le k$ we have
      $$ (k-m+2)^b \le 1 \ \ \ \text{ and } \ \ \ (k-n+1)^a \le (m-n+1)^a,$$
   it follows that
   \begin{align*}
      & \lbd_{m,n}\\
      & = \dsum_{k=-\infty}^{n-1} a_{m,k+1} \|B_k\| b_{k,n}
         + \dsum_{k=n}^{m-1} a_{m,k+1} \|B_k\| a_{k,n}
         + \dsum_{k=m}^{+\infty} b_{m,k+1} \|B_k\| a_{k,n}\\
      & \le \dsum_{k=-\infty}^{n-1} D (m-k)^a (|k+1|+1)^\eps
            \delta (|k+1|+1)^{-\gamma}
            D (n-k+1)^b (|n|+1)^\eps\\
      & \qquad + \dsum_{k=n}^{m-1} D (m-k)^a (|k+1|+1)^\eps
            \delta (|k+1|+1)^{-\gamma}
            D (k-n+1)^a (|n|+1)^\eps\\
      & \qquad + \dsum_{k=m}^{+\infty} D (k-m+2)^b (|k+1|+1)^\eps
            \delta (|k+1|+1)^{-\gamma}
            D (k-n+1)^a (|n|+1)^\eps\\
      & \le 2^{-a} D^2 \delta (m-n+1)^a (|n|+1)^\eps
         \dsum_{k=- \infty}^{+\infty} (|k+1|+1)^{\eps-\gamma}\\
      & = 2^{-a} D^2 \delta (m-n+1)^a (|n|+1)^\eps  (2 \zeta_{\gamma-\eps} -1).
   \end{align*}
   Analogously, we also have
      $$ \mu_{m,n}
         \le D^2 \delta (n-m+1)^b (|n|+1)^\eps  (2 \zeta_{\gamma-\eps} -1)$$
   Therefore, we have
      $$ \maxs{\lbd,\mu} \le 2^{-a} D \delta (2 \zeta_{\gamma-\eps} -1)$$
   and if $2^{-a} D \delta (2 \zeta_{\gamma-\eps} -1) < 1$, taking into account that these type of bounds also satisfy hypothesis~\eqref{hyp:sup_m_a_m,n/a_m,j<infty} and~\eqref{hyp:sup_m_b_m,n/b_m,j<infty}, Corollary~\ref{cor:robustness:Z:a_m,n=D(m-n+1)^a (|n|+1)^eps...} follows immediately from Theorem~\ref{thm:robustness:Z}.
\end{proof}
%-----------------------------------------------------------------------------%
\section{Robustness in $\N$} \label{Section:Robustness:N}
%-----------------------------------------------------------------------------%
In this section we are going study the robustness problem for a linear difference equation of the form
\begin{equation}\label{eq:x_m+1=A_m x_m, m in N}
   x_{m+1} = A_m x_m, \ m \in \N,
\end{equation}
with $\prts{A_m}_{m \in \N} \subset B(X)$. It is obvious that for this equation we can also define a general dichotomy with bounds $\prts{a_{m,n}}_{(m,n) \in \N^2_\ge}$ and $\prts{b_{m,n}}_{(m,n) \in \N^2_\le}$ in an analogous way to that of equation~\eqref{eq:x_m+1=A_m x_m, m in Z}, replacing $\Z$, $\Z^2_\ge$ and $\Z^2_\le$ by $\N$, $\N^2_\ge$ and $\N^2_\le$, respectively, where $\N^2_\ge= \N^2 \cap \Z^2_\ge$ and $\N^2_\le= \N^2 \cap \Z^2_\le$.

Given a sequence $\prts{B_n}_{n \in \N} \subset B(X)$, we consider equation
\begin{equation}\label{eq:x_m+1=(A_m+B_m)x_m, m in N}
   x_{m+1} = (A_m + B_m) x_m, \ m \in \N
\end{equation}
and we define
\begin{equation}\label{def:lbd'_m,n}
   \lbd'_{m,n}
   := \dsum_{k=1}^{n-1} a_{m,k+1} \|B_k\| b_{k,n}
      + \dsum_{k=n}^{m-1} a_{m,k+1} \|B_k\| a_{k,n}
      + \dsum_{k=m}^{+\infty} b_{m,k+1} \|B_k\| a_{k,n}
\end{equation}
for every $\prts{m,n} \in \N^2_\ge$ and
\begin{equation}\label{def:mu'_m,n}
   \mu'_{m,n}
   := \dsum_{k=1}^{m-1} a_{m,k+1} \|B_k\| b_{k,n}
      + \dsum_{k=m}^{n-1} b_{m,k+1} \|B_k\| b_{k,n}
      + \dsum_{k=n}^{+\infty} b_{m,k+1} \|B_k\| a_{k,n}
\end{equation}
for every $\prts{m,n} \in \N^2_\le$.

Our main result in this section is the following:

\begin{theorem}\label{thm:robustness:N}
   Suppose that equation~\eqref{eq:x_m+1=A_m x_m, m in N} admits a dichotomy with bounds $\prts{a_{m,n}}$ and $\prts{b_{m,n}}$ such that
   \begin{equation}\label{hyp:sup_m_a_m,n/a_m,j<infty:N}
      \sup\limits_{m \ge j} \dfrac{a_{m,n}}{a_{m,j}} < +\infty
      \text{ for every $\prts{j,n} \in \N^2_\ge$}.
   \end{equation}
   Let $\prts{B_n}_{n \in \N} \subset B(X)$ be a sequence of bounded linear operators such that
         $$ \theta:= \maxs{\sup\limits_{\prts{m,n} \in \N^2_\ge}
               \dfrac{\lbd'_{m,n}}{a_{m,n}}, \
               \sup\limits_{\prts{m,n} \in \N^2_\le}
               \dfrac{\mu'_{m,n}}{b_{m,n}}}
            < 1,$$
   where the $\lbd'$s and $\mu'$s are given by~\eqref{def:lbd'_m,n} and~\eqref{def:mu'_m,n}. Then equation~\eqref{eq:x_m+1=(A_m+B_m)x_m, m in N}
   admits a dichotomy with bounds $\prts{\sigma a_{m,n}}$ and $\prts{\sigma b_{m,n}}$, where $\sigma=1/(1-\theta)$.
\end{theorem}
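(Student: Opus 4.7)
The plan is to deduce Theorem~\ref{thm:robustness:N} from Theorem~\ref{thm:robustness:Z} via an extension argument. Extend the data to all of $\Z$ by setting $\tilde{A}_n=A_n$, $\tilde{B}_n=B_n$, $\tilde{P}_n=P_n$ for $n\ge 1$ and $\tilde{A}_n=\Id$, $\tilde{B}_n=0$, $\tilde{P}_n=P_1$ for $n\le 0$. Because the extra factors are identities, the induced propagator collapses as $\tilde{\cA}_{m,n}=\cA_{\max\set{m,1},\max\set{n,1}}$ for every $\prts{m,n}\in\Z^2_\ge\cup\Z^2_\le$. Hence the invariance axioms~\ref{eq:(S1)}--\ref{eq:(S3)} for the extended system hold: on the original indices nothing changes, on the extended past the identities obviously preserve $\ker P_1$, and the required invertibility on $\ker\tilde{P}_n=\ker P_1$ reduces to that of $\cA_{m,1}|_{\ker P_1}$. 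Define extended bounds $\tilde{a}_{m,n}=a_{\max\set{m,1},\max\set{n,1}}$ and $\tilde{b}_{m,n}=b_{\max\set{m,1},\max\set{n,1}}$; the above identity for $\tilde{\cA}_{m,n}$ together with~\ref{eq:(D1)}--\ref{eq:(D2)} for the original equation yields $\|\tilde{\cA}_{m,n}\tilde{P}_n\|\le\tilde{a}_{m,n}$ on $\Z^2_\ge$ and $\|\tilde{\cA}_{m,n}\tilde{Q}_n\|\le\tilde{b}_{m,n}$ on $\Z^2_\le$.

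Next I would verify the two suprema hypotheses of Theorem~\ref{thm:robustness:Z}. For $\tilde{a}$: when $j\le 0$ the ratio $\tilde{a}_{m,n}/\tilde{a}_{m,j}$ equals $1$ for every $m\ge j$, while when $j\ge 1$ it equals $a_{m',\max\set{n,1}}/a_{m',j}$ with $m'=\max\set{m,1}$, whose supremum over $m\ge j$ coincides with one of the suprema controlled by~\eqref{hyp:sup_m_a_m,n/a_m,j<infty:N} and is therefore finite. For $\tilde{b}$ no $\N$-analogue is imposed by Theorem~\ref{thm:robustness:N} and none is needed: when $j\le 0$ the ratio is again $1$; when $j\ge 1$ the auxiliary index $m'=\max\set{m,1}$ ranges over the finite set $\set{1,\ldots,j}$ as $m\le j$, so the supremum reduces to a finite maximum of positive numbers.

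The decisive observation is that $\|\tilde{B}_k\|=0$ for $k\le 0$, so every term with $k\le 0$ in the $\Z$-series~\eqref{def:lbd_m,n} and~\eqref{def:mu_m,n} attached to the extended data drops out. A case-by-case matching of the surviving terms against~\eqref{def:lbd'_m,n} and~\eqref{def:mu'_m,n}, using that the extended bounds coincide with the originals on indices $\ge 1$, gives $\lbd_{m,n}=\lbd'_{\max\set{m,1},\max\set{n,1}}$ and $\mu_{m,n}=\mu'_{\max\set{m,1},\max\set{n,1}}$ for the extended system. Dividing by $\tilde{a}_{m,n}$ and $\tilde{b}_{m,n}$ and taking suprema over $\Z^2_\ge$ and $\Z^2_\le$ yields $\maxs{\lbd,\mu}\le\theta<1$. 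Theorem~\ref{thm:robustness:Z} then produces a generalized dichotomy for the extended perturbed equation with bounds $\sigma\tilde{a}_{m,n}$ and $\sigma\tilde{b}_{m,n}$ for some $\sigma\le 1/(1-\theta)$, and restricting all indices to $\N$ gives the claimed dichotomy for~\eqref{eq:x_m+1=(A_m+B_m)x_m, m in N} with bounds $a_{m,n}/(1-\theta)$ and $b_{m,n}/(1-\theta)$.

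The work is essentially bookkeeping: the main obstacle is running the case analysis on the signs of $m$ and $n$ carefully enough to confirm that $\tilde{a}$ and $\tilde{b}$ dominate the extended propagator on the whole of $\Z^2_\ge$ and $\Z^2_\le$ and that the $\Z$-series truly collapse to the corresponding $\N$-series. No new analytical idea is needed beyond the fixed-point argument already used in the proof of Theorem~\ref{thm:robustness:Z}.
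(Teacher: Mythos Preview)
Your proposal is correct and follows essentially the same extension strategy as the paper: define $\tilde A_n,\tilde B_n,\tilde P_n$ by extending with identities, zeros, and $P_1$ for $n\le 0$, set $\tilde a_{m,n}=a_{\max\{m,1\},\max\{n,1\}}$ and $\tilde b_{m,n}=b_{\max\{m,1\},\max\{n,1\}}$ (which is exactly the paper's case-by-case definition in compact form), verify the hypotheses of Theorem~\ref{thm:robustness:Z}, observe that $\tilde B_k=0$ kills all $k\le 0$ terms so that $\lbd_{m,n}=\lbd'_{\max\{m,1\},\max\{n,1\}}$ and $\mu_{m,n}=\mu'_{\max\{m,1\},\max\{n,1\}}$, and restrict the resulting $\Z$-dichotomy back to $\N$. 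One small slip: in your check of~\eqref{hyp:sup_m_b_m,n/b_m,j<infty} for $j\le 0$, the ratio $\tilde b_{m,n}/\tilde b_{m,j}$ is not literally $1$ when $n\ge 1$ (it equals $b_{1,n}/b_{1,1}$), but it is independent of $m$ and hence the supremum is still finite, so the argument goes through unchanged.
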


The proof of Theorem~\ref{thm:robustness:N} will be given in section~\ref{section:proof:robustness:N}.

Now we will apply Theorem~\ref{thm:robustness:N} to dichotomies with bounds of the form
\begin{equation}\label{a_m,n=a_n/a_m c_n...:in N}
   a_{m,n} = \dfrac{a_n}{a_m} c_n
      \ \ \ \text{ and } \ \ \
      b_{m,n} = \dfrac{b_n}{b_m} c_n,
\end{equation}
where $\prts{a_n}_{n \in \N}$, $\prts{b_n}_{n \in \N}$ and $\prts{c_n}_{n \in \N}$ are sequences of positive numbers.

\begin{corollary}\label{cor:robustness:N:a_m,n=a_n/b_n c_n}
   Suppose that equation~\eqref{eq:x_m+1=A_m x_m, m in N} admits a general dichotomy with bounds given~\eqref{a_m,n=a_n/a_m c_n...:in N} and
   \begin{equation}\label{hyp:a_m b_n/(a_n b_m)<=C:in_N}
      \dfrac{a_m b_n}{a_n b_m} \le C
         \text{ for every } \prts{m,n} \in \N^2_\le
   \end{equation}
   for some number $C$. Let $B_n \colon X \to X$, $n \in \N$, be bounded linear operators such that
      $$ \|B_n\| \le \dfrac{a_n}{a_{n+1} c_{n+1}} \beta_n$$
   where $\beta:=\dsum_{k=1}^{+\infty} \beta_k < +\infty$. If $\beta C < 1$, then equation~\eqref{eq:x_m+1=(A_m+B_m)x_m, m in N} admits a dichotomy with bounds obtained by multiplying the bounds in~\eqref{a_m,n=a_n/a_m c_n...:in N} by $1/(1-\beta C)$.
\end{corollary}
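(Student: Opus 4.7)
The plan is to reduce the corollary to Theorem~\ref{thm:robustness:N} by mirroring the argument used for Corollary~\ref{cor:robustness:Z:a_m,n=a_n/b_n c_n}, with the only structural change being that the leftmost sum in each of~\eqref{def:lbd'_m,n} and~\eqref{def:mu'_m,n} now starts at $k=1$ rather than at $k=-\infty$. First I would check hypothesis~\eqref{hyp:sup_m_a_m,n/a_m,j<infty:N}: the product form $a_{m,n}=(a_n/a_m)c_n$ makes $a_{m,n}/a_{m,j}=(a_n c_n)/(a_j c_j)$ independent of $m$, so the supremum is trivially finite. Note also that setting $m=n$ in~\eqref{hyp:a_m b_n/(a_n b_m)<=C:in_N} forces $C \ge 1$, which will be needed to absorb the ``diagonal'' pieces of the sums.

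Next I would substitute the pointwise bound $\|B_k\| \le (a_k/(a_{k+1}c_{k+1}))\beta_k$ into each of the three summands defining $\lbd'_{m,n}$ and $\mu'_{m,n}$. The factor $a_k/(a_{k+1}c_{k+1})$ is engineered so that, together with the explicit expressions for $a_{m,k+1}$, $b_{m,k+1}$, $a_{k,n}$ and $b_{k,n}$, almost everything telescopes; what remains in each summand is the desired target bound ($a_{m,n}$ or $b_{m,n}$), multiplied by $\beta_k$ and, at worst, by a ratio of the form $a_r b_s/(a_s b_r)$ with $r\le s$, which by~\eqref{hyp:a_m b_n/(a_n b_m)<=C:in_N} is $\le C$. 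Summing over the three index ranges and using $\sum_{k=1}^{+\infty}\beta_k=\beta$ yields
$$ \lbd'_{m,n} \le C\beta\, a_{m,n} \quad\text{and}\quad \mu'_{m,n} \le C\beta\, b_{m,n}, $$
so the constant $\theta$ appearing in Theorem~\ref{thm:robustness:N} satisfies $\theta \le C\beta$.

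With $C\beta<1$ we get $\theta<1$, and Theorem~\ref{thm:robustness:N} applies and delivers a dichotomy for~\eqref{eq:x_m+1=(A_m+B_m)x_m, m in N} with bounds $\sigma a_{m,n}$ and $\sigma b_{m,n}$, where $\sigma=1/(1-\theta) \le 1/(1-C\beta)$, giving the stated conclusion. There is no genuine obstacle: truncation of the sum at $k=1$ only tightens the estimates, and the substance of the robustness assertion is already contained in Theorem~\ref{thm:robustness:N}. The work is purely the bookkeeping of the three index ranges in~\eqref{def:lbd'_m,n} and~\eqref{def:mu'_m,n} together with a single invocation of~\eqref{hyp:a_m b_n/(a_n b_m)<=C:in_N} to absorb the mixed $a$/$b$ ratios.
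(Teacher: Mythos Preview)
Your proposal is correct and follows essentially the same approach as the paper, which explicitly states that the proof is analogous to that of Corollary~\ref{cor:robustness:Z:a_m,n=a_n/b_n c_n} and omits it. Your outline---verifying~\eqref{hyp:sup_m_a_m,n/a_m,j<infty:N} via the product structure, noting $C\ge 1$, substituting the bound on $\|B_k\|$ to obtain $\lbd'_{m,n}\le C\beta\, a_{m,n}$ and $\mu'_{m,n}\le C\beta\, b_{m,n}$, and then invoking Theorem~\ref{thm:robustness:N}---is precisely that argument transplanted to $\N$.
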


Since the proof of Corollary~\ref{cor:robustness:N:a_m,n=a_n/b_n c_n} is similar to the proof of Corollary~\ref{cor:robustness:Z:a_m,n=a_n/b_n c_n}, we omit it.

Now we will apply the last corollary to the $\prts{\mu,\nu}-$dichotomies that were introduced in~\cite{Bento-Silva-JDDE} and \cite{Bento-Silva-NATMA-2012}, respectively, for differential equations and for difference equations.

\begin{corollary}
   Suppose that equation~\eqref{eq:x_m+1=A_m x_m, m in N} has a nonuniform $(\mu,\nu)-$dichotomy, i.e, admits a general dichotomy with bounds given by
   \begin{equation}\label{a_m,n=D(mu_m/mu_n)^a nu_n^eps}
      a_{m,n} = D \pfrac{\mu_m}{\mu_n}^a \nu_n^\eps
      \ \ \ \text{ and } \ \ \
      b_{m,n} = D \pfrac{\mu_m}{\mu_n}^b \nu_n^\eps,
   \end{equation}
   where $\prts{\mu_n}_{n\in\N}$ and $\prts{\nu_n}_{n\in\N}$ are increasing sequences of positive numbers, $D \ge 1$,  $a,b \in \R$ and $\eps \ge 0$. Consider bounded linear perturbations $B_n \colon X \to X$, $n \in \N$, such that
      $$ \|B_n\| \le \delta \nu_{n+1}^{-\gamma}$$
   where $\delta > 0$ and $\gamma \in \R$ is such that
      $  \kappa :=
         \dsum_{k=1}^{+\infty} \pfrac{\mu_k}{\mu_{k+1}}^a
            \nu_{k+1}^{\eps-\gamma} < \infty.$
   If
      $$ a \le b \ \ \ \text{ and } \ \ \ \theta:=D \delta \kappa < 1,$$
   then equation~\eqref{eq:x_m+1=(A_m+B_m)x_m, m in N} admits a nonuniform $\prts{\mu,\nu}-$dichotomy with $D$ replaced by $D/(1-\theta)$ in~\eqref{a_m,n=D(mu_m/mu_n)^a nu_n^eps}.
\end{corollary}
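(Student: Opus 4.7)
The plan is to cast the nonuniform $(\mu,\nu)$-bounds into the multiplicative form~\eqref{a_m,n=a_n/a_m c_n...:in N} and then apply Corollary~\ref{cor:robustness:N:a_m,n=a_n/b_n c_n} directly. More precisely, I would set
\[
   a_n = \mu_n^{-a}, \qquad b_n = \mu_n^{-b}, \qquad c_n = D\,\nu_n^{\eps},
\]
so that $(a_n/a_m)c_n = D(\mu_m/\mu_n)^a \nu_n^{\eps}$ and $(b_n/b_m)c_n = D(\mu_m/\mu_n)^b \nu_n^{\eps}$, which matches~\eqref{a_m,n=D(mu_m/mu_n)^a nu_n^eps} exactly.

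Next I would verify the ratio condition~\eqref{hyp:a_m b_n/(a_n b_m)<=C:in_N}. A direct computation yields
\[
   \frac{a_m b_n}{a_n b_m} = \left(\frac{\mu_m}{\mu_n}\right)^{b-a},
\]
and since $\prts{\mu_n}$ is increasing, $b-a\ge 0$, and $m\le n$ in the relevant range, this ratio is bounded by $1$. Hence~\eqref{hyp:a_m b_n/(a_n b_m)<=C:in_N} holds with $C=1$.

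The next step is to rewrite the bound on $\|B_n\|$ in the shape required by Corollary~\ref{cor:robustness:N:a_m,n=a_n/b_n c_n}. Since
\[
   \frac{a_n}{a_{n+1}c_{n+1}} = \frac{1}{D}\left(\frac{\mu_{n+1}}{\mu_n}\right)^a \nu_{n+1}^{-\eps},
\]
the hypothesis $\|B_n\|\le \delta \nu_{n+1}^{-\gamma}$ becomes $\|B_n\|\le \frac{a_n}{a_{n+1}c_{n+1}}\beta_n$ with the choice
\[
   \beta_n = D\delta\left(\frac{\mu_n}{\mu_{n+1}}\right)^{a}\nu_{n+1}^{\eps-\gamma}.
\]
Summing over $n\ge 1$ gives $\beta=\sum_{k=1}^{+\infty}\beta_k = D\delta \kappa$, which is finite by assumption on $\kappa$. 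Therefore $\beta C = D\delta\kappa = \theta < 1$, and Corollary~\ref{cor:robustness:N:a_m,n=a_n/b_n c_n} delivers a general dichotomy for~\eqref{eq:x_m+1=(A_m+B_m)x_m, m in N} with bounds obtained by multiplying those in~\eqref{a_m,n=D(mu_m/mu_n)^a nu_n^eps} by $1/(1-\theta)$, which amounts to replacing $D$ by $D/(1-\theta)$.

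There is no real obstacle here: the work is essentially the bookkeeping needed to identify the correct sequences $(a_n)$, $(b_n)$, $(c_n)$, $(\beta_n)$; the only point deserving a sentence of justification is why $C=1$ suffices, which follows from the monotonicity of $\prts{\mu_n}$ together with $a\le b$. Everything else is a direct invocation of the previously established corollary.
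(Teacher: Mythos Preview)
Your proof is correct and follows exactly the same approach as the paper: you apply Corollary~\ref{cor:robustness:N:a_m,n=a_n/b_n c_n} with the choices $a_n=\mu_n^{-a}$, $b_n=\mu_n^{-b}$, $c_n=D\nu_n^{\eps}$ and $\beta_n=D\delta(\mu_n/\mu_{n+1})^{a}\nu_{n+1}^{\eps-\gamma}$, verify that $C=1$ works because $a\le b$ and $(\mu_n)$ is increasing, and conclude from $\beta=D\delta\kappa=\theta<1$. Your justification of $C=1$ is in fact slightly more explicit than the paper's.
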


\begin{proof}
   Making in Corollary~\ref{cor:robustness:N:a_m,n=a_n/b_n c_n}
      $$ a_n = \mu_n^{-a}, \ \ \
         b_n = \mu_n^{-b}, \ \ \
         c_n = D \nu_n^{\eps} \ \ \ \text{ and } \ \ \
         \beta_n = D \delta \pfrac{\mu_n}{\mu_{n+1}}^a
            \nu_{n+1}^{\eps-\gamma},$$
   we have
      $$ \dfrac{a_n}{a_m} c_n
            = D \pfrac{\mu_m}{\mu_n}^a \nu_n^\eps, \ \ \
         \dfrac{b_n}{b_m} c_n
            = D \pfrac{\mu_m}{\mu_n}^b \nu_n^\eps \ \ \ \text{ and } \ \ \
         \dfrac{a_n}{a_{n+1} c_{n+1}} \beta_n
         = \delta \nu_{n+1}^{-\gamma}.$$
   Furthermore, since $a \le b$, for these bounds, inequality~\eqref{hyp:a_m b_n/(a_n b_m)<=C:in_N} is true with $C=1$. Hence we can apply Corollary~\ref{cor:robustness:N:a_m,n=a_n/b_n c_n} and taking into account that $\beta = D \delta \kappa$, the result follows immediately.
\end{proof}

The last corollary improves the robustness theorem proved by Chu in \cite[Theorem 3.11]{Chu-BSM-2013}. In fact, in \cite{Chu-BSM-2013} the nonuniform bounds of the dichotomy of the linear perturbation have exponent $2 \eps$ instead of $\eps$, the result is proved only for invertible cocycles and it was necessary to impose the additional conditions
   $$ \rho_1 := \dsum_{k=1}^{+\infty} \nu_{k+1}^{\eps-\gamma} < +\infty, \ \ \
      \rho_2 := \dsum_{k=1}^{+\infty} \nu_{k+1}^{2\eps-\gamma} < +\infty $$
   $$ a=b < 0, \ \ \
      D\delta(\kappa+\rho_1) < 1 \ \ \ \text{ and } \ \ \
      4 D \tilde D \delta \rho_2 <1,$$
where
   $$ \tilde D
      = \maxs{\dfrac{D}{1-D \delta (\kappa+\rho_1)},
         \dfrac{D}{1-D\delta\rho_1}}.$$

Now we will apply our result to nonuniform exponential dichotomies.

\begin{corollary} \label{cor:robustness:N:a_m,n=De^(a(m-n)+eps n)...}
   Suppose that equation~\eqref{eq:x_m+1=A_m x_m, m in N} has a nonuniform exponential dichotomy, i.e, admits a general dichotomy with bounds given by
   \begin{equation}\label{a_m,n=e^(a(m-n)+eps n)...}
      a_{m,n} = D\e^{a(m-n)+\eps n}
      \ \ \ \text{ and } \ \ \
      b_{m,n} = D \e^{b(m-n) + \eps n},
   \end{equation}
   with $D \ge 1$, $\eps \ge 0$ and $a,b \in \R$. Let $B_n \colon X \to X$, $n \in \N$, such that
      $$ \|B_n\| \le \delta \e^{-\gamma(n+1)}$$
   for some $\delta > 0$ and some $\gamma \in \R$. If
      $$ a \le b, \ \ \
         \eps - \gamma < 0 \ \ \ \text{ and } \ \ \
         \theta:= D  \delta \e^{2\eps-2\gamma-a} \prts{1-\e^{\eps-\gamma}}^{-1}
            < 1,$$
   then equation~\eqref{eq:x_m+1=(A_m+B_m)x_m, m in N} admits a nonuniform exponential dichotomy with bounds of the form~\eqref{a_m,n=e^(a(m-n)+eps n)...} with $D$ replaced by $D/(1-\theta)$.
\end{corollary}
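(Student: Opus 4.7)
The plan is to derive this corollary by applying Corollary~\ref{cor:robustness:N:a_m,n=a_n/b_n c_n} in the same spirit as the analogous corollary for the case $\Z$ (Corollary~\ref{cor:bonustness:Z:exponential}) and for the nonuniform $(\mu,\nu)$-case. Concretely, I would set
   $$ a_n = \e^{-an}, \ \ \
      b_n = \e^{-bn}, \ \ \
      c_n = D \e^{\eps n},$$
so that the bounds $(a_n/a_m)c_n$ and $(b_n/b_m)c_n$ reproduce exactly the bounds in~\eqref{a_m,n=e^(a(m-n)+eps n)...} required for the dichotomy.

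Next I would choose $\beta_n$ so that the perturbation estimate $\|B_n\| \le \delta \e^{-\gamma(n+1)}$ is encoded by the inequality $\|B_n\| \le (a_n/(a_{n+1} c_{n+1})) \beta_n$ of Corollary~\ref{cor:robustness:N:a_m,n=a_n/b_n c_n}. A direct computation shows that $a_n/(a_{n+1} c_{n+1}) = \e^{a}/(D\e^{\eps(n+1)})$, so the natural choice is
   $$ \beta_n = D \delta \e^{-a} \e^{(\eps-\gamma)(n+1)},$$
which makes that inequality an equality. The assumption $a \le b$ yields $\e^{(b-a)(m-n)} \le 1$ for $(m,n) \in \N^2_\le$, so hypothesis~\eqref{hyp:a_m b_n/(a_n b_m)<=C:in_N} holds with $C=1$.

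It remains to compute $\beta = \sum_{k=1}^{+\infty}\beta_k$ and check that $\beta C = \beta < 1$ matches the hypothesis $\theta < 1$. Since $\eps - \gamma < 0$, the geometric series $\sum_{k=1}^{+\infty} \e^{(\eps-\gamma)(k+1)}$ converges and equals $\e^{2(\eps-\gamma)}/(1-\e^{\eps-\gamma})$, giving
   $$ \beta
      = D \delta \e^{-a} \cdot
         \dfrac{\e^{2(\eps-\gamma)}}{1-\e^{\eps-\gamma}}
      = D \delta \e^{2\eps-2\gamma-a}\prts{1-\e^{\eps-\gamma}}^{-1}
      = \theta.$$
With $\beta C = \theta < 1$, Corollary~\ref{cor:robustness:N:a_m,n=a_n/b_n c_n} applies and produces a dichotomy for the perturbed equation with the bounds in~\eqref{a_m,n=e^(a(m-n)+eps n)...} multiplied by $1/(1-\theta)$, which is exactly the statement (the multiplicative constant $1/(1-\theta)$ is absorbed into $D$).

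I do not expect any serious obstacle: the proof is a templated verification that the three ``parameters'' $(a_n,b_n,c_n)$ and the perturbation envelope $\beta_n$ fit the general scheme. The only mildly delicate point is making sure the index shift in $\|B_n\| \le \delta \e^{-\gamma(n+1)}$ is reflected correctly in $\beta_n$, which is why the factor $\e^{2(\eps-\gamma)}$ (and not $\e^{\eps-\gamma}$) appears in the final expression for $\theta$.
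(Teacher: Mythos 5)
Your proof is correct and matches the paper's approach exactly: the paper also invokes Corollary~\ref{cor:robustness:N:a_m,n=a_n/b_n c_n} with the identical choices $a_n = \e^{-an}$, $b_n = \e^{-bn}$, $c_n = D\e^{\eps n}$, $\beta_n = D\delta\e^{-a}\e^{(\eps-\gamma)(n+1)}$ (the paper simply states this without the verification details you supply). The computation of $\beta$, including the correct appearance of $\e^{2(\eps-\gamma)}$ from the index shift, is right.
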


\begin{proof}
   The result follows immediately from Corollary~\ref{cor:robustness:N:a_m,n=a_n/b_n c_n} with
      $$ a_n = \e^{-an}, \ \ \
         b_n = \e^{-bn}, \ \ \
         c_n = D \e^{\eps n} \ \ \ \text{ and } \ \ \
         \beta_n = D \delta \e^{-a} \e^{(\eps-\gamma)(n+1)}.$$
\end{proof}

Corollary~\ref{cor:robustness:N:a_m,n=De^(a(m-n)+eps n)...} improves Theorem 6 of Barreira, Silva and Valls~\cite{Barreira-Silva-Valls-JDE-2009} because in that paper the result is only for invertible cocycles, it was necessary to impose the three additional hypotheses
   $$ a=-b <0, \ \ \
      \eps < -a
      \ \ \ \text{ and } \ \ \
      \gamma = -2 \eps$$
and in the nonuniform bounds of the dichotomy of the perturbed equation the exponent $\eps$ was replaced by $2\eps$ and in the uniform bounds of the perturbed equation the exponent $a$ was replaced by $\tilde a > a$.

Now we will apply our result to improve the robustness of the dichotomies considered by Barreira, Fan, Valls and Zhang~\cite{Barreira-Fan-Valls-Zhang-TMNA-2011}, namely the nonuniform polynomial dichotomies with bounds of the form
\begin{equation}\label{a_m,n=D(m/n)^a n^eps...}
   a_{m,n} = D \pfrac{m}{n}^a n^\eps
   \ \ \ \text{ and } \ \ \
   b_{m,n} = D \pfrac{m}{n}^b n^\eps,
\end{equation}
with $D \ge 1$,  $a,b \in \R$ and $\eps \ge 0$.

\begin{corollary}\label{cor:robustness:N:a_m,n=D(m/n)^a n^eps...}
   Suppose that equation~\eqref{eq:x_m+1=A_m x_m, m in N} has a general dichotomy with bounds given by~\eqref{a_m,n=D(m/n)^a n^eps...}. Consider bounded linear perturbations $B_n \colon X \to X$, $n \in \N$, such that
      $$ \|B_n\| \le \delta \prts{n+1}^{-\gamma}$$
   for some $\delta >0$ and some $\gamma \in \R$. If
      $$ a \le b, \ \ \
         \eps - \gamma<-1 \ \ \ \text{ and } \ \ \
         \theta:= D \delta \maxs{1,2^{-a}} \prts{\zeta_{\gamma-\eps}-1} < 1,$$
   where $\zeta_{\gamma-\eps} = \dsum_{k=1}^{+\infty} k^{\eps-\gamma}$,
   then equation~\eqref{eq:x_m+1=(A_m+B_m)x_m, m in Z} admits a nonuniform polynomial dichotomy with bounds given by \eqref{a_m,n=D(m/n)^a n^eps...} with $D$ replaced $D/(1-\theta)$.
\end{corollary}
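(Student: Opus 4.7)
The plan is to reduce directly to Corollary~\ref{cor:robustness:N:a_m,n=a_n/b_n c_n} by choosing the sequences
$$ a_n = n^{-a}, \qquad b_n = n^{-b}, \qquad c_n = D n^\eps. $$
With this choice one checks at once that $(a_n/a_m)c_n = D(m/n)^a n^\eps$ and $(b_n/b_m)c_n = D(m/n)^b n^\eps$, so the bounds in~\eqref{a_m,n=a_n/a_m c_n...:in N} coincide with~\eqref{a_m,n=D(m/n)^a n^eps...}. Next, since $a \le b$, for $m \le n$ one has $a_m b_n/(a_n b_m) = (m/n)^{b-a} \le 1$, so hypothesis~\eqref{hyp:a_m b_n/(a_n b_m)<=C:in_N} holds with $C = 1$.

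The second step is to identify the perturbation coefficients $\beta_n$ and estimate their sum. From
$$ \frac{a_{n+1} c_{n+1}}{a_n} = \pfrac{n}{n+1}^a D (n+1)^\eps, $$
the assumption $\|B_n\| \le \delta(n+1)^{-\gamma}$ is equivalent to $\|B_n\| \le (a_n/(a_{n+1} c_{n+1})) \beta_n$ with
$$ \beta_n = D\delta \pfrac{n}{n+1}^a (n+1)^{\eps-\gamma}. $$
For every $k \ge 1$ one has $(k/(k+1))^a \le 1$ when $a \ge 0$, and $(k/(k+1))^a = (1+1/k)^{-a} \le 2^{-a}$ when $a < 0$, so in any case $(k/(k+1))^a \le \maxs{1,2^{-a}}$. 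Combined with the reindexing $\sum_{k=1}^{+\infty}(k+1)^{\eps-\gamma} = \sum_{j=2}^{+\infty} j^{\eps-\gamma} = \zeta_{\gamma-\eps}-1$ (which is finite because $\eps-\gamma < -1$), this gives
$$ \beta = \dsum_{k=1}^{+\infty} \beta_k \le D\delta \maxs{1,2^{-a}} \prts{\zeta_{\gamma-\eps}-1} = \theta. $$

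Finally, since $\beta C = \beta \le \theta < 1$, Corollary~\ref{cor:robustness:N:a_m,n=a_n/b_n c_n} applies and yields a dichotomy for~\eqref{eq:x_m+1=(A_m+B_m)x_m, m in N} with bounds obtained by multiplying those in~\eqref{a_m,n=D(m/n)^a n^eps...} by $1/(1-\beta)$; as $1/(1-\beta) \le 1/(1-\theta)$, the claimed conclusion follows. No serious obstacle is expected: the only non-mechanical point is the uniform bound $(k/(k+1))^a \le \maxs{1,2^{-a}}$, which accounts for the two regimes $a \ge 0$ and $a < 0$ appearing in the statement.
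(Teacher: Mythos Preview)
Your proof is correct and follows exactly the same approach as the paper, which simply states that the result follows immediately from Corollary~\ref{cor:robustness:N:a_m,n=a_n/b_n c_n} with the choices $a_n = n^{-a}$, $b_n = n^{-b}$, $c_n = Dn^\eps$ and $\beta_n = D\delta(n/(n+1))^a (n+1)^{\eps-\gamma}$. You have merely filled in the details (the verification that $C=1$, the bound $(k/(k+1))^a \le \maxs{1,2^{-a}}$, and the observation that $1/(1-\beta) \le 1/(1-\theta)$) that the paper leaves implicit.
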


\begin{proof}
   The result follows immediately from Corollary~\ref{cor:robustness:N:a_m,n=a_n/b_n c_n} with
      $$ a_n = n^{-a}, \ \ \
         b_n = n^{-b}, \ \ \
         c_n = Dn^\eps \ \ \ \text{ and } \ \ \
         \beta_n = D \delta \pfrac{n}{n+1}^a (n+1)^{\eps-\gamma}.$$

\end{proof}

In~\cite{Barreira-Fan-Valls-Zhang-TMNA-2011}, Barreira, Fan, Valls and Zhang proved a robustness result for invertible cocycles with a nonuniform dichtomy with bounds of the form~\eqref{a_m,n=D(m/n)^a n^eps...} assuming that
   $$ \gamma > 2 \eps +1 \ \ \ \text{ and } \ \ \ \mins{-a,b} > \eps$$
and obtained a dichotomy for the perturbed equation with the exponent $\eps$ in the nonuniform bound replaced by $2 \eps$. Thus Corollary~\ref{cor:robustness:N:a_m,n=D(m/n)^a n^eps...} improves Theorem 3.1 in~\cite{Barreira-Fan-Valls-Zhang-TMNA-2011}.

Finally we apply our result to the nonuniform polynomial dichotomies introduced in~\cite{Bento-Silva-JFA-2009}.

\begin{corollary}
   Suppose that equation~\eqref{eq:x_m+1=A_m x_m, m in N} has a nonuniform polynomial dichotomy with bounds given by
   \begin{equation}\label{a_m,n=D(m-n+1)^a n^eps}
      a_{m,n} = D \prts{m-n+1}^a n^\eps
      \ \ \ \text{ and } \ \ \
      b_{m,n} = D \prts{n-m+1}^b n^\eps,
   \end{equation}
   where $D \ge 1$,  $a,b \in \R$ and $\eps \ge 0$. Let $B_n \colon X \to X$, $n \in \N$, be bounded linear perturbations such that
      $$ \|B_n\| \le \delta \prts{n+1}^{-\gamma}$$
   for some $\delta >0$ and $\gamma \in \R$. If
      $$ a \le b \le 0, \ \ \
         \gamma+\eps<-1 \ \ \ \text{ and } \ \ \
         \theta:= 2^{-a} D \delta \prts{\zeta_{\gamma-\eps}-1} < 1,$$
   where $\zeta_{\gamma-\eps} = \dsum_{k=1}^{+\infty} k^{\eps-\gamma}$, then equation~\eqref{eq:x_m+1=(A_m+B_m)x_m, m in Z} admits a nonuniform polynomial dichotomy with bounds given by~\eqref{a_m,n=D(m-n+1)^a n^eps} with $D$ replaced by $D/(1-\theta)$.
\end{corollary}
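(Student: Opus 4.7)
The plan is to apply Theorem~\ref{thm:robustness:N} directly. Note that the bounds $D(m-n+1)^a n^\eps$ do not factorize in the separable form $a_n c_n/a_m$ required by Corollary~\ref{cor:robustness:N:a_m,n=a_n/b_n c_n}, so that corollary cannot be invoked; instead, the argument will closely parallel the one used for the analogous statement on $\Z$ (Corollary~\ref{cor:robustness:Z:a_m,n=D(m-n+1)^a (|n|+1)^eps...}), with only minor adjustments due to the starting index $k=1$ in the sums and the replacement of $|n|+1$ by $n$.

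First I would verify hypothesis~\eqref{hyp:sup_m_a_m,n/a_m,j<infty:N}: for $(j,n)\in\N^2_\ge$ and $m\ge j$, the ratio $a_{m,n}/a_{m,j}=\prtsr{(m-n+1)/(m-j+1)}^a(n/j)^\eps$ is uniformly bounded in $m$ because $a\le 0$ together with $m-j+1\le m-n+1$ forces the first factor to lie in $(0,1]$.

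Next, using $a\le b\le 0$, I would establish the elementary inequalities controlling each of the three sums in~\eqref{def:lbd'_m,n} and~\eqref{def:mu'_m,n}. For $\lbd'_{m,n}$ (with $n\le m$) these are: (i) $(m-k)^a\le(m-n+1)^a$ and $(n-k+1)^b\le 1$ when $1\le k<n$; (ii) $(m-k)^a(k-n+1)^a\le(m-n)^a\le 2^{-a}(m-n+1)^a$ when $n\le k<m$, which rests on the key observation that $(m-k)(k-n+1)\ge m-n$ combined with $m-n+1\le 2(m-n)$; and (iii) $(k-m+2)^b\le 1$ and $(k-n+1)^a\le(m-n+1)^a$ when $m\le k$. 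Substituting these together with $\|B_k\|\le\delta(k+1)^{-\gamma}$ and using $\sum_{k=1}^{+\infty}(k+1)^{\eps-\gamma}=\zeta_{\gamma-\eps}-1$ yields $\lbd'_{m,n}/a_{m,n}\le 2^{-a}D\delta(\zeta_{\gamma-\eps}-1)$. A symmetric case analysis for $\mu'_{m,n}$ (with $m\le n$), based on $(m-k)^a\le 1$, on $(k-m+2)(n-k+1)\ge 2(n-m+1)$ for the middle range (which, together with $2^b\le 1$, absorbs the factor $2^{-a}$), and on $(k-n+1)^a\le 1$ for the tail, gives the slightly better bound $\mu'_{m,n}/b_{m,n}\le D\delta(\zeta_{\gamma-\eps}-1)$. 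Together these furnish $\theta\le 2^{-a}D\delta(\zeta_{\gamma-\eps}-1)$, and under the hypothesis $\theta<1$ the conclusion follows at once from Theorem~\ref{thm:robustness:N}.

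The main obstacle, and the only step that is not a routine monotonicity comparison, is the inequality $(m-k)(k-n+1)\ge m-n$ for integers $n\le k<m$. Viewing the left-hand side as a concave quadratic in $k$, its minimum over the discrete interval $\set{n,\ldots,m-1}$ is attained at the endpoints, where in both cases it equals exactly $m-n$. This elementary but essential fact is what produces the clean multiplicative factor $2^{-a}$ in the final bound on $\theta$, so that, as in the other corollaries, the nonuniform part $n^\eps$ of the dichotomy is preserved intact (rather than being inflated to $n^{2\eps}$ as in earlier results in the literature).
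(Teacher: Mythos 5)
Your proof is correct and follows the same strategy as the paper's (omitted) proof, which the authors indicate is analogous to that of Corollary~\ref{cor:robustness:Z:a_m,n=D(m-n+1)^a (|n|+1)^eps...}: a direct application of Theorem~\ref{thm:robustness:N} after establishing the same elementary monotonicity estimates, with the same key inequality $(m-k)(k-n+1)\ge m-n$ producing the $2^{-a}$ factor. Your remark that the bounds do not separate into the product form required by Corollary~\ref{cor:robustness:N:a_m,n=a_n/b_n c_n}, so that one must return to the general theorem, is also exactly what the analogy to the $\Z$-version dictates.
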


Since the proof of this corollary is analogous to the proof of Corollary~\ref{cor:robustness:Z:a_m,n=D(m-n+1)^a (|n|+1)^eps...} we omit it.

\section{Proof of Theorem~\ref{thm:robustness:Z}} \label{section:proof:robustness:Z}

The proof of Theorem~\ref{thm:robustness:Z} uses the Banach fixed point theorem in some suitable complete metric spaces.

For every $n \in \Z$, let $\Omg^+_n$ be the space of sequences $W = \prts{W_{m,n}}_{m \ge n} \subseteq B(X)$ such that
\begin{equation}\label{norm:Omg^+_n}
   \|W\|^+_n
   = \sup_{m \ge n} \dfrac{\|W_{m,n}\|}{a_{m,n}}
   < +\infty
\end{equation}
and let $\Omg^-_n$ be the space of sequences $Z = \prts{Z_{m,n}}_{m \le n} \subseteq B(X)$ such that
\begin{equation}\label{norm:Omg^-_n}
   \|Z\|^-_n
   = \sup_{m \le n} \dfrac{\|Z_{m,n}\|}{b_{m,n}}
   < +\infty.
\end{equation}
It is easy to prove that $\prts{\Omg^+_n, \|\cdot\|^+_n}$ and $\prts{\Omg^-_n, \|\cdot\|^-_n}$ are Banach spaces. We are also going to consider the Banach space $\Omg_n = \Omg^+_n \times \Omg^-_n$ equipped with the norm
\begin{equation}\label{norm:Omg_n}
   \|(W,Z)\|_n
   = \maxs{\|W\|_n^+, \|Z\|^-_n}.
\end{equation}

\begin{lemma}\label{lemma:J_n}
   Let $n \in \Z$. For every $\prts{W,Z} \in \Omg_n$, define
   \begin{equation}\label{def:J_n(W,Z)}
      J_n(W,Z) = \prts{J_{m,n}(W,Z)}_{m \ge n},
   \end{equation}
   where
   \begin{equation}\label{def:J_m,n(W,Z)}
      J_{m,n}(W,Z)
      = - \dsum_{k=-\infty}^{n-1} C_{m,k} Z_{k,n}
         + \dsum_{k=n}^{m-1} C_{m,k} W_{k,n}
         - \dsum_{k=m}^{+\infty} D_{m,k} W_{k,n}
   \end{equation}
   and
   \begin{equation}\label{def:C_m,k&D_m,k}
      C_{m,k} = \cA_{m,k+1} P_{k+1} B_k
      \ \ \ \text{ and } \ \ \
      D_{m,k} = \cA_{m,k+1} Q_{k+1} B_k.
   \end{equation}
   Then $J_n$ is a bounded linear operator from $\Omg_n$ into $\Omg_n^+$ and
   \begin{equation}\label{ine:||J_n||<=lbd}
      \|J_n\| \le \lbd,
   \end{equation}
   where $\lbd$ is given by~\eqref{def:lbd+mu}.
\end{lemma}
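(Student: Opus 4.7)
The plan is to verify three things in turn: the series defining $J_{m,n}(W,Z)$ converges in $B(X)$, the map $J_n$ is linear, and the resulting object lies in $\Omg^+_n$ with $\|J_n(W,Z)\|^+_n \le \lbd \, \|(W,Z)\|_n$. Linearity is immediate from formula (\ref{def:J_m,n(W,Z)}): the coefficients $C_{m,k}$ and $D_{m,k}$ are independent of $(W,Z)$, and each summand depends linearly on either $W_{k,n}$ or $Z_{k,n}$.

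For the norm estimate, the key observation is that the three sums in (\ref{def:J_m,n(W,Z)}) are organised so that $C_{m,k}$ only appears when $k \le m-1$ (equivalently $m \ge k+1$) and $D_{m,k}$ only appears when $k \ge m$ (equivalently $k+1 \ge m$). Hence (D1) gives $\|C_{m,k}\| = \|\cA_{m,k+1} P_{k+1} B_k\| \le a_{m,k+1}\|B_k\|$ in the first two sums, while (D2) gives $\|D_{m,k}\| \le b_{m,k+1}\|B_k\|$ in the third. Inserting the bounds $\|W_{k,n}\| \le \|W\|^+_n \, a_{k,n}$ (valid for $k \ge n$) and $\|Z_{k,n}\| \le \|Z\|^-_n \, b_{k,n}$ (valid for $k \le n$) coming from the definitions (\ref{norm:Omg^+_n}) and (\ref{norm:Omg^-_n}), and factoring out the common constant $\|(W,Z)\|_n = \maxs{\|W\|^+_n, \|Z\|^-_n}$ defined in (\ref{norm:Omg_n}), the upper bound for $\|J_{m,n}(W,Z)\|$ is recognised as exactly $\|(W,Z)\|_n \, \lbd_{m,n}$, with $\lbd_{m,n}$ given in (\ref{def:lbd_m,n}).

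Hypothesis (\ref{def:lbd+mu}) yields $\lbd_{m,n} \le \lbd \, a_{m,n} < +\infty$, which simultaneously certifies absolute convergence of the two infinite series in $B(X)$ (so $J_{m,n}(W,Z)$ is a well-defined element of $B(X)$ by completeness) and supplies the required supremum bound: dividing by $a_{m,n}$ and taking the supremum over $m \ge n$ produces $\|J_n(W,Z)\|^+_n \le \lbd \, \|(W,Z)\|_n$, so $J_n \colon \Omg_n \to \Omg^+_n$ is bounded with $\|J_n\| \le \lbd$. The main bookkeeping obstacle I anticipate is simply the index-range check just described, ensuring that (D1) is invoked precisely where $\cA$ acts as the forward cocycle on the range of $P$ and (D2) precisely where it acts as the inverse on the kernel subspaces; once this split is recorded correctly, the remainder of the argument is a direct plug-in computation matching the definition of $\lbd_{m,n}$.
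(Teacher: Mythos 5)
Your proof is correct and follows essentially the same route as the paper's: bound $\|C_{m,k}\|$ and $\|D_{m,k}\|$ via (D1) and (D2), bound $\|W_{k,n}\|$ and $\|Z_{k,n}\|$ via the norms on $\Omg^+_n$ and $\Omg^-_n$, and assemble these into $\|J_{m,n}(W,Z)\| \le \lbd_{m,n}\|(W,Z)\|_n \le \lbd\, a_{m,n}\|(W,Z)\|_n$, from which both absolute convergence and the operator bound follow. Your explicit index-range check (that $C_{m,k}$ only occurs with $m \ge k+1$ and $D_{m,k}$ only with $m \le k+1$) is exactly the bookkeeping that the paper records in its inequalities for $\|C_{m,k}\|$ and $\|D_{m,k}\|$.
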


\begin{proof}
%   Since from~\eqref{def:C_m,k&D_m,k}, \ref{eq:(D1)}, \eqref{norm:Omg^-_n}, and~\eqref{norm:Omg_n} we have
%      $$ \|C_{m,k} Z_{k,n}\|
%         \le a_{m,k+1} \, \|B_k\| \, b_{k,n} \, \|Z\|^-_n
%         \le a_{m,k+1} \, \|B_k\| \, b_{k,n} \, \|(W,Z)\|_n$$
%   for all $(m,k) \in \Z^2_>$ and all $(k,n) \in \Z^2_\le$, from~\eqref{def:C_m,k&D_m,k}, \ref{eq:(D2)}, \eqref{norm:Omg^+_n} and~\eqref{norm:Omg_n} we obtain
%      $$ \|C_{m,k} W_{k,n}\|
%         \le a_{m,k+1} \, \|B_k\| \, a_{k,n} \, \|W\|^+_n
%         \le a_{m,k+1} \, \|B_k\| \, a_{k,n} \, \|(W,Z)\|_n$$
%   for all $(m,k) \in \Z^2_>$ and all $(k,n) \in \Z^2_\ge$ and by~\eqref{def:C_m,k&D_m,k}, \ref{eq:(D2)}, \eqref{norm:Omg^+_n} and~\eqref{norm:Omg_n} we get
%      $$ \|D_{m,k} W_{k,n}\|
%         \le b_{m,k+1} \, \|B_k\| \, a_{k,n} \, \|W\|^+_n
%         \le b_{m,k+1} \, \|B_k\| \, a_{k,n} \, \|(W,Z)\|_n$$
%   for all $(m,k) \in \Z^2_\le$ and all $(k,n) \in \Z^2_\ge$,
   It is obvious that from~\eqref{def:C_m,k&D_m,k} and~\ref{eq:(D1)} we obtain
   \begin{equation}\label{ine:||C_m,k||}
      \|C_{m,k}\|
      \le \|\cA_{m,k+1}P_{k+1}\| \ \|B_k\|
      \le a_{m,k+1} \|B_k\|
      \text{\ \ for all } \prts{m,k} \in \Z^2_>,
   \end{equation}
   and from~\eqref{def:C_m,k&D_m,k} and~\ref{eq:(D2)} we have
   \begin{equation}\label{ine:||D_m,k||}
      \|D_{m,k}\|
      \le \|\cA_{m,k+1}Q_{k+1}\| \ \|B_k\|
      \le b_{m,k+1} \|B_k\|
      \text{\ \ for all } \prts{m,k}\in \Z^2_\le.
   \end{equation}
   Moreover, by~\eqref{norm:Omg^+_n} and~\eqref{norm:Omg_n} it follows that
   \begin{equation}\label{ine:||W_k,n||<=a_k,n ||(W,Z)||_n}
      \|W_{k,n} \|
      \le a_{k,n} \|W\|^+_n
      \le a_{k,n} \|(W,Z)\|_n
      \text{\ \ for all } \prts{k,n} \in \Z^2_\ge
   \end{equation}
   and by~\eqref{norm:Omg^-_n} and~\eqref{norm:Omg_n} we get
   \begin{equation}\label{ine:||Z_k,n||<=b_k,n ||(W,Z)||_n}
      \|Z_{k,n}\|
      \le b_{k,n} \|Z\|^-_n
      \le b_{k,n} \|(W,Z)\|_n
      \text{\ \ for all } \prts{k,n} \in \Z^2_\le.
   \end{equation}
   Hence, from~\eqref{def:J_m,n(W,Z)}, \eqref{ine:||C_m,k||}, \eqref{ine:||D_m,k||}, \eqref{ine:||W_k,n||<=a_k,n ||(W,Z)||_n}, \eqref{ine:||Z_k,n||<=b_k,n ||(W,Z)||_n}, ~\eqref{def:lbd_m,n} and~\eqref{def:lbd+mu} we conclude that
   \begin{equation*}
      \begin{split}
         & \|J_{m,n}(W,Z)\|\\
         & \le \dsum_{k=-\infty}^{n-1} \|C_{m,k}\| \, \|Z_{k,n}\|
            + \dsum_{k=n}^{m-1} \|C_{m,k}\| \, \|W_{k,n}\|
            + \dsum_{k=m}^{+\infty} \|D_{m,k}\| \, \|W_{k,n}\|\\
         & \le \lbd_{m,n} \|(W,Z)\|_n\\
         & \le \lbd \, a_{m,n} \|(W,Z)\|_n.
      \end{split}
   \end{equation*}
   Hence $J_n(W,Z) \in \Omg_n^+$ and
   \begin{equation*}
      \|J_n(W,Z)\|_n^+ \le \lbd \, \|(W,Z)\|_n,
   \end{equation*}
   and this proves that  $J_n$ is a bounded linear operator from $\Omg_n$ into $\Omg^+_n$ and verifies~\eqref{ine:||J_n||<=lbd}.
\end{proof}

\begin{lemma}\label{lemma:L_n}
   Let $n \in \Z$. For every $\prts{W,Z} \in \Omg_n$, define
   \begin{equation}\label{def:L_n(W,Z)}
      L_n(W,Z) = \prts{L_{m,n}(W,Z)}_{m \le n},
   \end{equation}
   where
   \begin{equation}\label{def:L_m,n(W,Z)}
      L_{m,n}(W,Z)
      = \dsum_{k=-\infty}^{m-1} C_{m,k} Z_{k,n}
         - \dsum_{k=m}^{n-1} D_{m,k} Z_{k,n}
         +\dsum_{k=n}^{+\infty} D_{m,k} W_{k,n}
   \end{equation}
   $C_{m,k}$ and $D_{m,k}$ are defined by~\eqref{def:C_m,k&D_m,k}. Then $L_n$ is a bounded linear operator from $\Omg_n$ into $\Omg_n^-$ and
   \begin{equation}\label{ine:||L_n||<=mu}
      \|L_n\| \le \mu,
   \end{equation}
   where $\mu$ is given by~\eqref{def:lbd+mu}.
\end{lemma}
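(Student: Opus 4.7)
The plan is to mirror the proof of Lemma~\ref{lemma:J_n}, keeping careful track of which of the bounds $a_{m,k+1}$, $b_{m,k+1}$, $a_{k,n}$, $b_{k,n}$ applies to each summand so that the three sums in~\eqref{def:L_m,n(W,Z)} reproduce term by term the three sums in the definition~\eqref{def:mu_m,n} of $\mu_{m,n}$. Linearity of $L_n$ is immediate from~\eqref{def:L_m,n(W,Z)}, and combining the pointwise estimates~\eqref{ine:||C_m,k||}--\eqref{ine:||Z_k,n||<=b_k,n ||(W,Z)||_n} with the finiteness of $\mu$ in~\eqref{def:lbd+mu} ensures that each of the three series defining $L_{m,n}(W,Z)$ converges absolutely in $B(X)$, so the map is well defined. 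Hence only the norm estimate remains.

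Fix $\prts{m,n} \in \Z^2_\le$. For the first sum, $k \le m-1$ forces $\prts{m,k+1} \in \Z^2_\ge$ and $k \le n-1$, so~\eqref{ine:||C_m,k||} yields $\|C_{m,k}\| \le a_{m,k+1}\|B_k\|$ and~\eqref{ine:||Z_k,n||<=b_k,n ||(W,Z)||_n} yields $\|Z_{k,n}\| \le b_{k,n}\|(W,Z)\|_n$. For the second sum, $m \le k \le n-1$ gives $\prts{m,k+1} \in \Z^2_\le$ and $\prts{k,n} \in \Z^2_\le$, so~\eqref{ine:||D_m,k||} gives $\|D_{m,k}\| \le b_{m,k+1}\|B_k\|$ while $\|Z_{k,n}\| \le b_{k,n}\|(W,Z)\|_n$ once more. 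For the third sum, $k \ge n \ge m$ makes $\prts{m,k+1} \in \Z^2_\le$ and $\prts{k,n} \in \Z^2_\ge$, hence $\|D_{m,k}\| \le b_{m,k+1}\|B_k\|$ and $\|W_{k,n}\| \le a_{k,n}\|(W,Z)\|_n$. Adding the three bounds reproduces precisely the three sums of~\eqref{def:mu_m,n}, so
\[
   \|L_{m,n}(W,Z)\|
   \le \mu_{m,n}\|(W,Z)\|_n
   \le \mu \, b_{m,n}\|(W,Z)\|_n,
\]
where the last inequality uses~\eqref{def:lbd+mu}.

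Dividing by $b_{m,n}$ and taking the supremum over $m \le n$ yields $L_n(W,Z) \in \Omg_n^-$ with $\|L_n(W,Z)\|_n^- \le \mu\|(W,Z)\|_n$, which is precisely~\eqref{ine:||L_n||<=mu}. I do not anticipate any substantial obstacle: the proof is the exact symmetric counterpart of Lemma~\ref{lemma:J_n}, and the only point requiring care is the index bookkeeping that selects the appropriate $a$- or $b$-bound in each of the three sums. The definition~\eqref{def:mu_m,n} of $\mu_{m,n}$ was tailored precisely so that this accounting matches up summand by summand.
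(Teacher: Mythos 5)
Your proof is correct and follows essentially the same approach as the paper: termwise estimation of the three sums in~\eqref{def:L_m,n(W,Z)} using~\eqref{ine:||C_m,k||}, \eqref{ine:||D_m,k||}, \eqref{ine:||W_k,n||<=a_k,n ||(W,Z)||_n} and~\eqref{ine:||Z_k,n||<=b_k,n ||(W,Z)||_n}, which reproduces $\mu_{m,n}$ exactly and yields $\|L_{m,n}(W,Z)\| \le \mu\, b_{m,n}\|(W,Z)\|_n$. Your explicit index-range bookkeeping and the remark on absolute convergence of the series are correct details that the paper leaves implicit.
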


\begin{proof}
   From~\eqref{def:L_m,n(W,Z)}, \eqref{ine:||C_m,k||}, \eqref{ine:||D_m,k||}, \eqref{ine:||W_k,n||<=a_k,n ||(W,Z)||_n}, \eqref{ine:||Z_k,n||<=b_k,n ||(W,Z)||_n}, \eqref{def:mu_m,n} and~\eqref{def:lbd+mu} we have
   \begin{equation*}
      \begin{split}
         & \|L_{m,n}(W,Z)\|\\
         & \le \dsum_{k=-\infty}^{m-1} \|C_{m,k}\| \, \|Z_{k,n}\|
            + \dsum_{k=m}^{n-1} \|D_{m,k}\| \, \|Z_{k,n}\|
            + \dsum_{k=n}^{+\infty} \|D_{m,k}\| \, \|W_{k,n}\|\\
         & \le \mu_{m,n} \|(W,Z)\|_n\\
         & \le \mu \, b_{m,n} \|(W,Z)\|_n.
      \end{split}
   \end{equation*}
   Thus $L_n(W,Z) \in \Omg_n^-$,
   \begin{equation*}
      \|L_n(W,Z)\|_n^- \le \mu \, \|(W,Z)\|_n
   \end{equation*}
   and  $L_n$ is a bounded linear operator from $\Omg_n$ into $\Omg^-_n$ that verifies~\eqref{ine:||L_n||<=mu}.
\end{proof}

\begin{lemma}
   Let $n \in \Z$. For every $\prts{W,Z} \in \Omg_n$, let
   \begin{equation*}
      T_n(W,Z) = \prts{J_n(W,Z),L_n(W,Z)}
   \end{equation*}
   with $J_n$ defined by~\eqref{def:J_n(W,Z)} and $L_n$ defined by~\eqref{def:L_n(W,Z)}. Then $T_n$ is a bounded linear operator from $\Omg_n$ into $\Omg_n$ and
   \begin{equation}\label{norm:T_n}
      \|T_n\| \le \maxs{\lbd,\mu}.
   \end{equation}
\end{lemma}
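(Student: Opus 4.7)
The plan is to observe that this lemma is essentially a packaging of the two previous lemmas: since $\Omg_n$ was defined as the product $\Omg_n^+ \times \Omg_n^-$ equipped with the max-norm in~\eqref{norm:Omg_n}, an operator into $\Omg_n$ is nothing more than a pair of operators, one into each factor. So the strategy has three ingredients: confirm that $T_n(W,Z)$ lands in $\Omg_n$, confirm linearity, and estimate the norm via the product norm.

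First I would verify that $T_n$ maps $\Omg_n$ into $\Omg_n$. By Lemma~\ref{lemma:J_n} we have $J_n(W,Z) \in \Omg_n^+$, and by Lemma~\ref{lemma:L_n} we have $L_n(W,Z) \in \Omg_n^-$, so the pair $T_n(W,Z) = (J_n(W,Z), L_n(W,Z))$ indeed belongs to $\Omg_n^+ \times \Omg_n^- = \Omg_n$. Linearity of $T_n$ is immediate from the linearity of $J_n$ and $L_n$, both already established in the two previous lemmas.

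For the norm estimate, I would apply the definition~\eqref{norm:Omg_n} of the product norm together with the bounds~\eqref{ine:||J_n||<=lbd} and~\eqref{ine:||L_n||<=mu}:
\begin{equation*}
   \|T_n(W,Z)\|_n
   = \maxs{\|J_n(W,Z)\|_n^+, \|L_n(W,Z)\|_n^-}
   \le \maxs{\lbd \|(W,Z)\|_n, \mu \|(W,Z)\|_n}
   = \maxs{\lbd,\mu}\, \|(W,Z)\|_n.
\end{equation*}
Taking the supremum over $(W,Z)$ in the unit ball of $\Omg_n$ yields~\eqref{norm:T_n}.

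There is no real obstacle here; the lemma is a bookkeeping statement whose role is to consolidate the two estimates of Lemmas~\ref{lemma:J_n} and~\ref{lemma:L_n} into a single contraction-type bound on the product space $\Omg_n$. This packaged form is presumably what allows an application of the Banach fixed point theorem in the proof of Theorem~\ref{thm:robustness:Z}, since under the hypothesis~\eqref{ine:max(lbd,mu)<1} the operator $T_n$ (suitably modified to have a fixed-point equation built from it) will be a contraction on $\Omg_n$.
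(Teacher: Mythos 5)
Your proof is correct and follows exactly the same route as the paper, which simply cites Lemmas~\ref{lemma:J_n} and~\ref{lemma:L_n} and leaves the product-norm bookkeeping to the reader. You have merely spelled out the details the paper deems immediate.
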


\begin{proof}
   It follows immediately from Lemmas~\ref{lemma:J_n} and~\ref{lemma:L_n}.
\end{proof}

\begin{lemma} \label{lemma:U_m,n_&_V_m,n}
   Let $n \in \Z$. If~\eqref{ine:max(lbd,mu)<1} is satisfied, then there is a unique sequence $U=\prts{U_{m,n}}_{m \ge n} \in \Omg^+_n$  and a unique sequence $V=\prts{V_{m,n}}_{m \le n} \in \Omg^-_n$ such that
   \begin{equation}\label{def:U_m,n}
      \begin{split}
         U_{m,n}
         & = \cA_{m,n} P_n + J_{m,n}(U,V)\\
         & = \cA_{m,n} P_n
            - \dsum_{k=-\infty}^{n-1} C_{m,k} V_{k,n}
            + \dsum_{k=n}^{m-1} C_{m,k} U_{k,n}
            - \dsum_{k=m}^{+\infty} D_{m,k} U_{k,n}
      \end{split}
   \end{equation}
   for all $m \ge n$ and
   \begin{equation}\label{def:V_m,n}
      \begin{split}
         V_{m,n}
         & = \cA_{m,n} Q_n + L_{m,n}(U,V)\\
         & = \cA_{m,n} Q_n
            + \dsum_{k=-\infty}^{m-1} C_{m,k} V_{k,n}
            - \dsum_{k=m}^{n-1} D_{m,k} V_{k,n}
            + \dsum_{k=n}^{+\infty} D_{m,k} U_{k,n}.
      \end{split}
   \end{equation}
   for all $m \le n$. Moreover,
   \begin{equation}\label{ine:||U_m,n||}
      \|U_{m,n}\| \le \sigma \, a_{m,n} \text{ for every } (m,n) \in \Z^2_\ge
   \end{equation}
   and
   \begin{equation}\label{ine:||V_m,n||}
      \|V_{m,n}\| \le \sigma \, b_{m,n} \text{ for every } (m,n) \in \Z^2_\le.
   \end{equation}
\end{lemma}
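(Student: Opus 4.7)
The plan is a direct application of the Banach fixed point theorem to an affine map on the Banach space $\Omg_n = \Omg_n^+ \times \Omg_n^-$ constructed from the operator $T_n$ analysed in the preceding lemmas.

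First I would observe that the ``constant term'' pair $\prts{\prts{\cA_{m,n}P_n}_{m \ge n},\prts{\cA_{m,n}Q_n}_{m \le n}}$ actually lies in $\Omg_n$: indeed, hypothesis~\ref{eq:(D1)} gives $\|\cA_{m,n}P_n\| \le a_{m,n}$ for $m \ge n$, so $\prts{\cA_{m,n}P_n}_{m \ge n} \in \Omg_n^+$ with norm at most $1$, and similarly~\ref{eq:(D2)} puts $\prts{\cA_{m,n}Q_n}_{m \le n}$ in $\Omg_n^-$ with norm at most $1$. Hence the pair has $\|\cdot\|_n$-norm at most $1$.

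Next I would define an affine operator $S_n \colon \Omg_n \to \Omg_n$ by
\[
   S_n(W,Z)
   = \Bigl(\prts{\cA_{m,n}P_n + J_{m,n}(W,Z)}_{m \ge n},\
           \prts{\cA_{m,n}Q_n + L_{m,n}(W,Z)}_{m \le n}\Bigr).
\]
By the previous lemma the linear part of $S_n$ is exactly $T_n$, and so $S_n$ is well defined on $\Omg_n$ and satisfies $\|S_n(W_1,Z_1)-S_n(W_2,Z_2)\|_n \le \maxs{\lbd,\mu}\,\|(W_1,Z_1)-(W_2,Z_2)\|_n$. Hypothesis~\eqref{ine:max(lbd,mu)<1} then makes $S_n$ a strict contraction, so the Banach fixed point theorem yields a unique $(U,V) \in \Omg_n$ with $S_n(U,V)=(U,V)$; this fixed point equation is precisely the system~\eqref{def:U_m,n}--\eqref{def:V_m,n}.

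Finally, to obtain the quantitative bounds~\eqref{ine:||U_m,n||} and~\eqref{ine:||V_m,n||} I would apply the standard a priori estimate: from $(U,V) = S_n(U,V)$, the triangle inequality, the bound on the constant term, and $\|T_n\|\le \maxs{\lbd,\mu}$,
\[
   \|(U,V)\|_n \le 1 + \maxs{\lbd,\mu}\,\|(U,V)\|_n,
\]
which gives $\|(U,V)\|_n \le 1/\prts{1-\maxs{\lbd,\mu}} = \sigma$. Translating back via the definitions~\eqref{norm:Omg^+_n}--\eqref{norm:Omg_n} yields $\|U_{m,n}\|\le \sigma a_{m,n}$ for $(m,n)\in\Z^2_\ge$ and $\|V_{m,n}\|\le \sigma b_{m,n}$ for $(m,n)\in\Z^2_\le$, completing the proof.

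There is no real conceptual obstacle here because all of the work has been packaged into Lemmas~\ref{lemma:J_n} and~\ref{lemma:L_n}; the only item deserving care is checking that the inhomogeneous term sits in $\Omg_n$ with norm at most $1$, which is the sole place where the dichotomy bounds~\ref{eq:(D1)}--\ref{eq:(D2)} enter at this stage.
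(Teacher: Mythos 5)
Your proposal is correct and follows essentially the same route as the paper: treat the affine map $\Upsilon_n = \Gamma_n + T_n$ (the paper's notation for your $S_n$) as a contraction on $\Omg_n$, invoke the Banach fixed point theorem for existence and uniqueness, and extract the norm bounds from the a priori estimate together with $\|\Gamma_n\|_n \le 1$. The only cosmetic difference is that you derive $\|(U,V)\|_n \le \sigma$ directly from the fixed point identity and the triangle inequality, while the paper cites the standard fixed point error estimate; both are equivalent.
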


\begin{proof}
   Let $n \in \Z$. Defining
      $$ \Gamma_n = \prts{\prts{\cA_{m,n} P_n}_{m \ge n},
         \prts{\cA_{m,n} Q_n}_{m \le n}},$$
   it follows from~\ref{eq:(D1)} and~\ref{eq:(D2)} that $\Gamma_n \in \Omg_n$. From~\eqref{norm:T_n} and~\eqref{ine:max(lbd,mu)<1} the operator $\Upsilon_n \colon \Omg_n \to \Omg_n$ defined by
      $$ \Upsilon_n = \Gamma_n + T_n$$
   is a contraction with Lipschitz constant less or equal than $\maxs{\lbd,\mu}<1$. Since $\Omg_n$ is a Banach space, the Banach fixed point Theorem assure us that $\Upsilon_n$ has a unique fixed point $\prts{U,V} \in \Omg_n$. Obviously, from the definition of $\Upsilon_n$, the fixed point $\prts{U,V}$ verifies~\eqref{def:U_m,n} and~\eqref{def:V_m,n}.

   Also from the Banach fixed point Theorem we have
      $$ \|\prts{U,V}- 0\|_n
         \le \sigma \|T_n 0 -0\|_n
         = \sigma \|\Gamma_n\|_n.$$
   Since $\|\Gamma_n\|_n \le 1$ we have $\|\prts{U,V}\|_n \le \sigma$
   and thus ~\eqref{ine:||U_m,n||} and~\eqref{ine:||V_m,n||} are satisfied.
\end{proof}

\begin{lemma}\label{lemma:U_m,n+Vm,n_solutions}
   The sequences $\prts{U_{m,n}}_{m \ge n}$ and $\prts{V_{m,n}}_{m \le n}$ are solutions of equation~\eqref{eq:x_m+1=(A_m+B_m)x_m, m in Z}.
\end{lemma}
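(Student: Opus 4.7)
The plan is to verify directly from the fixed-point identities \eqref{def:U_m,n} and \eqref{def:V_m,n} that $U_{m+1,n} = (A_m + B_m) U_{m,n}$ for every $(m,n) \in \Z^2_\ge$ and $V_{m+1,n} = (A_m + B_m) V_{m,n}$ for every $m < n$. Once these two recurrences are established, the sequences $(U_{m,n})_{m \ge n}$ and $(V_{m,n})_{m \le n}$ are by definition solutions of~\eqref{eq:x_m+1=(A_m+B_m)x_m, m in Z}.

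The calculation rests on a short list of algebraic identities obtained from the invariant splitting \ref{eq:(S1)}--\ref{eq:(S3)}. From \ref{eq:(S1)} one obtains $A_m \cA_{m,n} P_n = \cA_{m+1,n} P_n$ for $m \ge n$. Because \ref{eq:(S3)} defines $\cA_{m,m+1}|_{\ker P_{m+1}}$ as the inverse of $A_m|_{\ker P_m}$, one also has $A_m \cA_{m,n} Q_n = \cA_{m+1,n} Q_n$ for $m+1 \le n$, and more generally $A_m \cA_{m,k+1} Q_{k+1} = \cA_{m+1,k+1} Q_{k+1}$ whenever $m \le k$. Translated to the operators $C_{m,k}$ and $D_{m,k}$ defined in \eqref{def:C_m,k&D_m,k}, these identities become the shift rules $A_m C_{m,k} = C_{m+1,k}$ for $k \le m-1$ and $A_m D_{m,k} = D_{m+1,k}$ for $k \ge m+1$, together with the crucial boundary identity $A_m D_{m,m} = A_m \cA_{m,m+1} Q_{m+1} B_m = Q_{m+1} B_m$.

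With these identities in hand, I apply $A_m$ term by term to the formula~\eqref{def:U_m,n} for $U_{m,n}$, with $(m,n) \in \Z^2_\ge$ fixed. All three infinite sums shift their first index from $m$ to $m+1$, except for the single boundary contribution $-A_m D_{m,m} U_{m,n}$ that is split off from the third sum. Adding $B_m U_{m,n}$ and using the boundary identity yields
$$ -A_m D_{m,m} U_{m,n} + B_m U_{m,n} = (\Id - Q_{m+1}) B_m U_{m,n} = P_{m+1} B_m U_{m,n} = C_{m+1,m} U_{m,n}, $$
which precisely extends the middle sum's range from $k \le m-1$ to $k \le m$. Comparing with~\eqref{def:U_m,n} written at $m+1$ in place of $m$ gives $(A_m + B_m) U_{m,n} = U_{m+1,n}$.

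The argument for $V_{m,n}$ is entirely parallel: applying $A_m$ to~\eqref{def:V_m,n} shifts the three sums, the boundary term $-A_m D_{m,m} V_{m,n}$ is split off from the middle sum, and combining with $B_m V_{m,n}$ produces $C_{m+1,m} V_{m,n}$, which enlarges the first sum's range to $k \le m$ and reproduces the formula for $V_{m+1,n}$. The absolute convergence required to justify the reindexings is guaranteed by the bounds $\|U_{m,n}\| \le \sigma a_{m,n}$ and $\|V_{m,n}\| \le \sigma b_{m,n}$ from Lemma~\ref{lemma:U_m,n_&_V_m,n} together with the assumption $\max\{\lbd,\mu\}<1$. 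The only genuinely delicate point of the proof is the boundary identity $A_m D_{m,m} = Q_{m+1} B_m$, where one must use both the invariance of $\ker P_m$ under $A_m$ and the definition of $\cA_{m,m+1}$ as an inverse on that kernel; everything else is linear reindexing.
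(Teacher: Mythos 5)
Your proof is correct and follows essentially the same route as the paper: apply $A_m$ term-by-term to the fixed-point formulas for $U_{m,n}$ and $V_{m,n}$, use the shift identities $A_m C_{m,k}=C_{m+1,k}$ and $A_m D_{m,k}=D_{m+1,k}$, and reindex. The only (cosmetic) difference is at the boundary $k=m$: the paper shifts that term too, obtaining $D_{m+1,m}$, and then peels off $C_{m+1,m}U_{m,n}+D_{m+1,m}U_{m,n}=B_m U_{m,n}$; you instead stop the shift at $k=m$, use $A_m D_{m,m}=Q_{m+1}B_m$, and combine with $B_m$ via $B_m-Q_{m+1}B_m=P_{m+1}B_m=C_{m+1,m}$ — the same underlying identity $P_{m+1}+Q_{m+1}=\Id$ organized slightly differently.
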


\begin{proof}
   This is an immediate consequence of
   \begin{align*}
      & A_m U_{m,n}\\
      & = A_m \prts{\cA_{m,n} P_n - \dsum_{k=-\infty}^{n-1} C_{m,k} V_{k,n}
         + \dsum_{k=n}^{m-1} C_{m,k} U_{k,n} - \dsum_{k=m}^{+\infty} D_{m,k} U_{k,n}}\\
      & = \cA_{m+1,n} P_n - \dsum_{k=-\infty}^{n-1} C_{m+1,k} V_{k,n}
         + \dsum_{k=n}^{m-1} C_{m+1,k} U_{k,n} - \dsum_{k=m}^{+\infty} D_{m+1,k} U_{k,n}\\
      & = \cA_{m+1,n} P_n - \dsum_{k=-\infty}^{n-1} C_{m+1,k} V_{k,n}
         + \dsum_{k=n}^{m} C_{m+1,k} U_{k,n} - \dsum_{k=m+1}^{+\infty} D_{m+1,k} U_{k,n}\\
      &  \ \ \ - C_{m+1,m} U_{m,n} - D_{m+1,m} U_{m,n}\\
      & = U_{m+1,n} - B_m U_{m,n}
   \end{align*}
   and
   \begin{align*}
      & A_m V_{m,n}\\
      & = A_m \prts{\cA_{m,n} Q_n + \dsum_{k=-\infty}^{m-1} C_{m,k} V_{k,n}
          - \dsum_{k=m}^{n-1} D_{m,k} V_{k,n} + \dsum_{k=n}^{+\infty} D_{m,k} U_{k,n}}\\
      & = \cA_{m+1,n} Q_n + \dsum_{k=-\infty}^{m-1} C_{m+1,k} V_{k,n}
          - \dsum_{k=m}^{n-1} D_{m+1,k} V_{k,n} + \dsum_{k=n}^{+\infty} D_{m+1,k} U_{k,n}\\
      & = \cA_{m+1,n} Q_n + \dsum_{k=-\infty}^{m} C_{m+1,k} V_{k,n}
          - \dsum_{k=m+1}^{n-1} D_{m+1,k} V_{k,n} + \dsum_{k=n}^{+\infty} D_{m+1,k} U_{k,n}\\
      &  \ \ \ - C_{m+1,m} V_{m,n} - D_{m+1,m} V_{m,n}\\
      & = V_{m+1,n} - B_m V_{m,n}.
   \end{align*}
\end{proof}

\begin{lemma}\label{lemma:Um,jUj,n=Um,n}
   Let  $\prts{j,n} \in \Z^2_\ge$. Then
      $$ U_{m,j}U_{j,n} = U_{m,n} \text{ for every $m \ge j$}$$
   and
      $$ V_{m,j}U_{j,n} = 0 \text{ for every $m \le j$}.$$
\end{lemma}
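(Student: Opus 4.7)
The plan is to reduce both identities simultaneously to the uniqueness statement of the Banach fixed point theorem applied to the operator $T_j \colon \Omg_j \to \Omg_j$ constructed in Lemma~\ref{lemma:U_m,n_&_V_m,n} (with base point $j$ in place of $n$). Fix $\prts{j,n} \in \Z^2_\ge$ and set
\begin{equation*}
   \tilde U_m := U_{m,j} U_{j,n} \ \ (m \ge j), \qquad
   \tilde V_m := V_{m,j} U_{j,n} \ \ (m \le j), \qquad
   W_m := \tilde U_m - U_{m,n} \ \ (m \ge j).
\end{equation*}
Using the bounds~\eqref{ine:||U_m,n||} and~\eqref{ine:||V_m,n||} together with hypothesis~\eqref{hyp:sup_m_a_m,n/a_m,j<infty}, one checks at once that $\prts{W, \tilde V} \in \Omg_j$. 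The goal is to verify that $\prts{W, \tilde V}$ is a fixed point of $T_j$; by~\eqref{norm:T_n} and~\eqref{ine:max(lbd,mu)<1} the operator $T_j$ has norm strictly less than $1$, so $0$ is its only fixed point in $\Omg_j$, whence $W \equiv 0$ and $\tilde V \equiv 0$, which are precisely the two identities of the lemma.

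To derive this fixed point relation, substitute the defining equations~\eqref{def:U_m,n} for $U_{m,j}$ (with $m \ge j$) and~\eqref{def:V_m,n} for $V_{m,j}$ (with $m \le j$), and expand $\tilde U_m$ and $\tilde V_m$. The leading terms $\cA_{m,j} P_j U_{j,n}$ (for $m \ge j$) and $\cA_{m,j} Q_j U_{j,n}$ (for $m \le j$) are then expanded by applying~\eqref{def:U_m,n} at $\prts{j,n}$, using the algebraic identities
\begin{equation*}
   \cA_{m,j} P_j \cA_{j,n} = \cA_{m,n} P_n, \qquad
   \cA_{m,j} P_j C_{j,k} = C_{m,k}, \qquad
   \cA_{m,j} P_j D_{j,k} = 0,
\end{equation*}
together with the parallel identities $\cA_{m,j} Q_j \cA_{j,n} P_n = 0$, $\cA_{m,j} Q_j C_{j,k} = 0$, and $\cA_{m,j} Q_j D_{j,k} = D_{m,k}$. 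The first two follow from the invariance~\ref{eq:(S1)} and the cocycle property of $\cA$; the others from the range inclusions $C_{j,k}\prts{X} \subseteq E_j$ and $D_{j,k}\prts{X} \subseteq \ker P_j$. Subtracting the defining equation~\eqref{def:U_m,n} for $U_{m,n}$ and combining the overlapping $C$-sums over the ranges $\set{k \le n-1}$, $\set{n \le k \le j-1}$, and $\set{j \le k \le m-1}$ yields exactly
\begin{equation*}
   W_m = J_{m,j}\prts{W, \tilde V} \text{ for } m \ge j, \qquad
   \tilde V_m = L_{m,j}\prts{W, \tilde V} \text{ for } m \le j,
\end{equation*}
i.e.\ $\prts{W, \tilde V} = T_j\prts{W, \tilde V}$.

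The main obstacle is the bookkeeping in this substitution: one must split each sum at $k = j-1$, use the cancellation $-\dsum_{n \le k \le m-1} + \dsum_{n \le k \le j-1} = -\dsum_{j \le k \le m-1}$ (valid since $m \ge j \ge n$) to convert the residual $U_{k,n}$-sums into $W$-sums, and verify that the source terms $\cA_{m,n} P_n$ and $\dsum_{k \le n-1} C_{m,k} V_{k,n}$ cancel between the expansion of $\tilde U_m$ and the defining equation for $U_{m,n}$. Once these identifications are made, the contraction property of $T_j$ forces $\prts{W, \tilde V} = 0$ and the conclusion follows.
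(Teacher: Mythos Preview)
Your proposal is correct and follows essentially the same route as the paper: define the pair $(W_{m,j},Z_{m,j})=(U_{m,j}U_{j,n}-U_{m,n},\,V_{m,j}U_{j,n})$, use~\eqref{ine:||U_m,n||}, \eqref{ine:||V_m,n||} and~\eqref{hyp:sup_m_a_m,n/a_m,j<infty} to see it lies in $\Omg_j$, expand via~\eqref{def:U_m,n}--\eqref{def:V_m,n} together with the algebraic identities you list to obtain $(W,Z)=T_j(W,Z)$, and conclude $(W,Z)=0$ by the uniqueness of the fixed point of the linear contraction $T_j$. The only cosmetic difference is notation; the logical structure and the key cancellations are identical to the paper's argument.
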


\begin{proof}
   Let $\prts{j, n} \in \Z^2_\ge$ and define
      $$ W_{m,j} = U_{m,j} U_{j,n} - U_{m,n} \text{ for every } m \ge j$$
   and
      $$ Z_{m,j} = V_{m,j} U_{j,n} \text{ for every } m \le j.$$
   From
   \begin{align*}
      \norm{W_{m,j}}
      & \le \|U_{m,j}\| \ \|U_{j,n}\| + \|U_{m,n}\|\\
      & \le \sigma a_{m,j} \|U_{j,n}\| + \sigma a_{m,n}\\
      & \le \sigma a_{m,j} \prts{\|U_{j,n}\| + a_{m,n}/a_{m,j}}
   \end{align*}
   and because we are assuming~\eqref{hyp:sup_m_a_m,n/a_m,j<infty} we have $W = \prts{W_{m,j}}_{m \ge j} \in \Omg_j^+$. On the other hand, since
      $$ \norm{Z_{m,j}}
         \le \|V_{m,j}\| \ \|U_{j,n}\|\\
         \le \sigma b_{m,j} \|U_{j,n}\|,$$
   it follows that $Z = \prts{Z_{m,j}}_{m \le j} \in \Omg^-_j$. Hence $\prts{W,Z} \in \Omg_j$.

   Now we will prove that $(W,Z)$ is a fixed point of $T_j$. For $m \ge j$, from~\eqref{def:U_m,n} we obtain
   \begin{equation}\label{eq:W_m,j=U_m,j U_j,n - U_m,n=...}
      W_{m,j}
      = U_{m,j}U_{j,n} - U_{m,n}
      = \cA_{m,j} P_j U_{j,n} + J_{m,j}(U,V)U_{j,n} - U_{m,n}.
   \end{equation}
   Since from~\eqref{def:U_m,n} we have
   \begin{equation}\label{eq:A_m,j P_j U_j,n = ...}
      \cA_{m,j} P_j U_{j,n}
      = \cA_{m,n}P_n
         - \dsum_{k=- \infty}^{n-1} C_{m,k} V_{k,n}
         + \dsum_{k=n}^{j-1} C_{m,k} U_{k,n},
   \end{equation}
   and from~\eqref{def:J_m,n(W,Z)} we have
   \begin{equation}\label{eq:J_m,j(U,V) U_j,n = ...}
      \begin{split}
         & J_{m,j}(U,V) U_{j,n}\\
         & =  - \dsum_{k=-\infty}^{j-1} C_{m,k}V_{k,j}U_{j,n}
            + \dsum_{k=j}^{m-1} C_{m,k}U_{k,j}U_{j,n}
            - \dsum_{k=m}^{+\infty} D_{m,k}U_{k,j}U_{j,n},
      \end{split}
   \end{equation}
   it follows from~\eqref{eq:W_m,j=U_m,j U_j,n - U_m,n=...}, \eqref{eq:A_m,j P_j U_j,n = ...}, \eqref{eq:J_m,j(U,V) U_j,n = ...} and~\eqref{def:U_m,n} that
   \begin{align*}
      W_{m,j}
      & = - \dsum_{k=-\infty}^{j-1} C_{m,k}V_{k,j}U_{j,n}
            + \dsum_{k=j}^{m-1} C_{m,k}\prts{U_{k,j}U_{j,n} - U_{k,n}}\\
      & \ \ \ \
         - \dsum_{k=m}^{+\infty} D_{m,k}\prts{U_{k,j}U_{j,n} - U_{k,n}}\\
      & = J_{m,n}(W,Z).
   \end{align*}

   For $m \le j$ by~\eqref{def:V_m,n} we have
   \begin{equation}\label{eq:Z_m,j = V_m,j U_j,n}
      Z_{m,j}
      = V_{m,j}U_{j,n}
      = \cA_{m,j}Q_j U_{j,n} + L_{m,j}(U,V) U_{j,n},
   \end{equation}
   by~\eqref{def:U_m,n} we obtain
   \begin{equation}\label{eq:A_m,j Q_j U_j,n = ...}
      \cA_{m,j}Q_j U_{j,n}
      = - \dsum_{k=j}^{+\infty} D_{m,k} U_{k,n}
   \end{equation}
   and from~\eqref{def:L_m,n(W,Z)} we get
   \begin{equation}\label{eq:L_m,j(U,V) U_j,n = ...}
      L_{m,j}(U,V) U_{j,n}
      = \dsum_{k=-\infty}^{m-1} C_{m,k} V_{k,j}U_{j,n}
         - \dsum_{k=m}^{j-1} D_{m,k} V_{k,j}U_{j,n}
         + \dsum_{k=j}^{+\infty} D_{m,k} U_{k,j} U_{j,n}.
   \end{equation}
   Hence, it follows from~\eqref{eq:Z_m,j = V_m,j U_j,n}, \eqref{eq:A_m,j Q_j U_j,n = ...} and~\eqref{eq:L_m,j(U,V) U_j,n = ...} that
   \begin{align*}
      Z_{m,j}
      & = \dsum_{k=-\infty}^{m-1} C_{m,k} V_{k,j}U_{j,n}
         - \dsum_{k=m}^{j-1} D_{m,k} V_{k,j}U_{j,n}
         + \dsum_{k=j}^{+\infty} D_{m,k} \prts{U_{k,j} U_{j,n}-U_{k,n}}\\
      & = L_{m,j}(W,Z).
   \end{align*}

   Therefore $(W,Z)$ is a fixed point of the linear contraction $T_j$. Since $T_j$ has a unique fixed point, the zero of $\Omg_j$, we must have
      $$ W_{m,j} = U_{m,j} U_{j,n} - U_{m,n} = 0$$
   for all $m \ge j$ and
      $$ Z_{m,j} = V_{m,j} U_{j,n} =0,$$
   for all $m \le j$.
\end{proof}

\begin{lemma}\label{lemma:Vm,jVj,n=Vm,n}
   Let $\prts{j,n} \in \Z^2_\le$. Then
      $$ U_{m,j}V_{j,n} = 0 \text{ for every $m \ge j$}$$
   and
      $$ V_{m,j}V_{j,n} = V_{m,n} \text{ for every $m \le j$}.$$
\end{lemma}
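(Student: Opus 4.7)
The plan is to mimic the proof of Lemma~\ref{lemma:Um,jUj,n=Um,n}, with the roles of $U$ and $V$ (and of $P$ and $Q$) suitably swapped, and then appeal to uniqueness of the fixed point of the contraction $T_j$.

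Fix $(j,n) \in \Z^2_\le$ and define
\[
W_{m,j} = U_{m,j} V_{j,n} \text{ for } m \ge j, \qquad Z_{m,j} = V_{m,j} V_{j,n} - V_{m,n} \text{ for } m \le j.
\]
The first step is to show that $(W,Z) \in \Omg_j$. For $W$, the estimate $\|W_{m,j}\| \le \sigma a_{m,j}\|V_{j,n}\|$ is immediate from~\eqref{ine:||U_m,n||}. For $Z$, using~\eqref{ine:||V_m,n||} twice one gets
\[
\|Z_{m,j}\| \le \sigma b_{m,j}\bigl(\|V_{j,n}\| + b_{m,n}/b_{m,j}\bigr),
\]
and here hypothesis~\eqref{hyp:sup_m_b_m,n/b_m,j<infty} on the sequence $(b_{m,n})$ is exactly what is needed to conclude $Z \in \Omg^-_j$. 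Note that this is where the analogue~\eqref{hyp:sup_m_b_m,n/b_m,j<infty} of~\eqref{hyp:sup_m_a_m,n/a_m,j<infty} plays the same role it played in Lemma~\ref{lemma:Um,jUj,n=Um,n}.

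The main computation is to verify that $(W,Z)$ is a fixed point of the contraction $T_j$. I will expand $W_{m,j} = \cA_{m,j}P_j V_{j,n} + J_{m,j}(U,V)V_{j,n}$ using~\eqref{def:U_m,n} for $U_{m,j}$ and~\eqref{def:V_m,n} for $V_{j,n}$. Invariance of the splitting~\ref{eq:(S1)}--\ref{eq:(S2)} makes $P_j$ annihilate $\cA_{j,n}Q_n$ and all the $D_{j,k}$-terms for $k\ge j$, while the semigroup property $\cA_{m,j}\cA_{j,k+1}P_{k+1} = \cA_{m,k+1}P_{k+1}$ for $k+1\le j\le m$ collapses $\cA_{m,j}P_j C_{j,k}$ to $C_{m,k}$. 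After reorganising the sums against $J_{m,j}(W,Z)$, and observing that the extra $\dsum_{k=-\infty}^{j-1} C_{m,k} V_{k,n}$ produced by $\cA_{m,j}P_j V_{j,n}$ is precisely what is needed to convert $V_{k,j}V_{j,n}$ into $Z_{k,j}=V_{k,j}V_{j,n}-V_{k,n}$, one gets $W_{m,j} = J_{m,j}(W,Z)$.

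The computation for $Z_{m,j}$ is symmetric: expand $V_{m,j}V_{j,n} = \cA_{m,j}Q_j V_{j,n} + L_{m,j}(U,V)V_{j,n}$. Now $Q_j$ kills the $C_{j,k}$-terms, keeps the $D_{j,k}$-terms and keeps $\cA_{j,n}Q_n$, and the semigroup identity $\cA_{m,j}\cA_{j,n}Q_n = \cA_{m,n}Q_n$ on $\ker P_n$ (together with $\cA_{m,j}D_{j,k} = D_{m,k}$ for $k\ge j$) lets one subtract the defining formula~\eqref{def:V_m,n} for $V_{m,n}$. The telescoping identity $-\dsum_{k=j}^{n-1} + \dsum_{k=m}^{n-1} = \dsum_{k=m}^{j-1}$, valid because $m\le j\le n$, is what merges the $V_{k,n}$ terms with the $V_{k,j}V_{j,n}$ terms to produce $-D_{m,k} Z_{k,j}$ on the middle range. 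One then recognises the result as $L_{m,j}(W,Z)$. The only real pitfall in the whole argument will be this telescoping/index bookkeeping, so I will lay out the three ranges $k<m$, $m\le k<j$, $k\ge j$ carefully. Having verified $T_j(W,Z)=(W,Z)$, uniqueness of the fixed point (the zero element of $\Omg_j$, since $T_j$ is linear with Lipschitz constant $\max\{\lbd,\mu\}<1$) forces $W\equiv 0$ and $Z\equiv 0$, which are precisely the two claimed identities.
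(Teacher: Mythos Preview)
Your proposal is correct and follows precisely the paper's own proof: the same definitions $W_{m,j}=U_{m,j}V_{j,n}$ and $Z_{m,j}=V_{m,j}V_{j,n}-V_{m,n}$, the same use of~\eqref{hyp:sup_m_b_m,n/b_m,j<infty} to place $(W,Z)$ in $\Omg_j$, the same verification that $(W,Z)=T_j(W,Z)$ via the computations of $\cA_{m,j}P_jV_{j,n}$ and $\cA_{m,j}Q_jV_{j,n}$, and the same appeal to the uniqueness of the fixed point of the linear contraction $T_j$. Your description of the telescoping $\sum_{k=m}^{n-1}-\sum_{k=j}^{n-1}=\sum_{k=m}^{j-1}$ is exactly the index bookkeeping the paper performs implicitly.
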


\begin{proof}
   Let $\prts{j,n} \in \Z^2_\le$ and consider
      $$ W_{m,j} = U_{m,j} V_{j,n}, \ m \ge j,$$
   and
      $$ Z_{m,j} = V_{m,j} V_{j,n} - V_{m,n} , \ m \le j.$$
   Since
      $$ \norm{W_{m,j}}
         \le \|U_{m,j}\| \ \|V_{j,n}\|
         \le \sigma a_{m,j} \|V_{j,n}\|,$$
   it follows that $W = \prts{W_{m,j}}_{m \ge j} \in \Omg_j^+$. On the other hand, we have
   \begin{align*}
      \norm{Z_{m,j}}
      & \le \|V_{m,j}\| \ \|V_{j,n}\| + \|V_{m,n}\|\\
      & \le \sigma b_{m,j} \|U_{j,n}\| + \sigma b_{m,n} \\
      & \le \sigma b_{m,j} \prts{\|U_{j,n}\|  + b_{m,n}/b_{m,j}},
   \end{align*}
   and from~\eqref{hyp:sup_m_b_m,n/b_m,j<infty} it follows that $Z = \prts{W_{m,j}}_{m \le j} \in \Omg^-_j$. Hence $\prts{W,Z} \in \Omg_j$.

   Now we will prove that $(W,Z)$ is a fixed point of $T_j$. For $m \ge j$ by~\eqref{def:U_m,n} we have
   \begin{equation}\label{eq:W_m,j=U_m,j V_j,n}
      W_{m,j}
      = U_{m,j}V_{j,n}
      = \cA_{m,j}P_j V_{j,n} + J_{m,j}(U,V) V_{j,n}
   \end{equation}
   and because from~\eqref{def:V_m,n} we have
   \begin{equation}\label{eq:A_m,j P_j V_{j,n} = ...}
      \cA_{m,j}P_j V_{j,n}
      = \dsum_{k=-\infty}^{j-1} C_{m,k} V_{k,n}
   \end{equation}
   and from~\eqref{def:J_m,n(W,Z)} we obtain
   \begin{equation}\label{eq:J_m,j(U,V) V_j,n = ...}
      \begin{split}
         & J_{m,j}(U,V) V_{j,n}\\
         & = - \dsum_{k=-\infty}^{j-1} C_{m,k} V_{k,j}V_{j,n}
            + \dsum_{k=j}^{m-1} C_{m,k} U_{k,j}V_{j,n}
            - \dsum_{k=j}^{+\infty} D_{m,k} U_{k,j} V_{j,n},
      \end{split}
   \end{equation}
   it follows from~\eqref{eq:W_m,j=U_m,j V_j,n}, \eqref{eq:A_m,j P_j V_{j,n} = ...} and~\eqref{eq:J_m,j(U,V) V_j,n = ...} that
   \begin{align*}
      W_{m,j}
      & = -\dsum_{k=-\infty}^{j-1} C_{m,k} \prts{V_{k,j}V_{j,n} - V_{k,n}}
         + \dsum_{k=j}^{m-1} D_{m,k} U_{k,j}V_{j,n}
         - \dsum_{k=j}^{+\infty} D_{m,k} U_{k,j} V_{j,n}\\
      & = J_{m,j}(W,Z).
   \end{align*}

   For $m \le j$ by~\eqref{def:V_m,n} we have
   \begin{equation}\label{eq:Z_m,j=V_m,jV_j,n-V_m,n}
      Z_{m,j}
      = V_{m,j}V_{j,n} - V_{m,n}
      = \cA_{m,j} Q_j V_{j,n} + L_{m,j}(U,V)V_{j,n} - V_{m,n}
   \end{equation}
   and
   \begin{equation}\label{eq:A_m,j Q_j V_j,n=...}
      \cA_{m,j} Q_j V_{j,n}
      = \cA_{m,n}Q_n
         - \dsum_{k=j}^{n-1} D_{m,k} V_{k,n}
         + \dsum_{k=n}^{+\infty} D_{m,k} U_{k,n}
   \end{equation}
   and from~\eqref{def:L_m,n(W,Z)} we get
   \begin{equation}\label{eq:L_m,j(U,V) V_j,n=...}
      L_{m,j}(U,V) V_{j,n}
      = \dsum_{k=-\infty}^{m-1} C_{m,k}V_{k,j}V_{j,n}
         - \dsum_{k=m}^{j-1} D_{m,k}V_{k,j}V_{j,n}
         + \dsum_{k=j}^{+\infty} D_{m,k}U_{k,j}V_{j,n}.
   \end{equation}
   Thus it follows from~\eqref{eq:Z_m,j=V_m,jV_j,n-V_m,n}, \eqref{eq:A_m,j Q_j V_j,n=...}, ~\eqref{eq:L_m,j(U,V) V_j,n=...} and~\eqref{def:V_m,n} that
   \begin{align*}
      Z_{m,j}
      & = \dsum_{k=-\infty}^{m-1} C_{m,k} \prts{V_{k,j}V_{j,n} - V_{k,n}}
            - \dsum_{k=m}^{j-1} D_{m,k} \prts{V_{k,j}V_{j,n} - V_{k,n}}+\\
      & \qquad + \dsum_{k=j}^{+\infty} D_{m,k} U_{k,j}V_{j,n}\\
      & = L_{m,j}(W,Z).
   \end{align*}

   Hence $(W,Z)$ is a fixed point of the linear contraction $T_j$. But, since the only fixed point of $T_j$ is zero, it follows that $W_{m,j} = U_{m,j} V_{j,n} = 0$ for every $m \ge j$ and $Z_{m,j} = V_{m,j} V_{j,n} -  V_{m,n}=0$ for every $m \le j$.
\end{proof}

Letting
   $$ \hcA_{m,n} =
      \begin{cases}
         \prts{A_{m-1}+B_{m-1}} \ldots \prts{A_n+B_n} & \text{ if } m > n,\\
         \Id & \text{ if } m = n,
      \end{cases}$$
it is clear that
\begin{equation}\label{eq:hAm,n=A_m,n+sum A_m,k B_k hA_k,n}
   \hcA_{m,n} =
   \cA_{m,n} + \dsum_{k=n}^{m-1} \cA_{m,k+1} B_k \hcA_{k,n}
\end{equation}
for every $\prts{m,n} \in \Z^2_\ge$.

\begin{lemma} \label{lemma:U_m,j_hcA_j,n_V_n,n_=_0}
   Let $\prts{j,n} \in \Z^2_\ge$. Then for every $m \ge j$
      $$ U_{m,j} \hcA_{j,n} V_{n,n} = 0$$
   and for every $m \le j$
      $$ V_{m,j} \hcA_{j,n} V_{n,n} =
         \begin{cases}
            \hcA_{m,n} V_{n,n}, & \text{ if } \ n \le m \le j,\\
            V_{m,n}, & \text{ if } \ m \le n.
         \end{cases}$$
\end{lemma}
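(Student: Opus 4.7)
The plan is to mirror the template of Lemmas~\ref{lemma:Um,jUj,n=Um,n} and~\ref{lemma:Vm,jVj,n=Vm,n}: construct the pair $(W,Z)$ measuring the defect from the claim, verify $(W,Z)\in\Omg_j$, and then show that $(W,Z)$ is a fixed point of the linear contraction $T_j$. Since $T_j$ has $0\in\Omg_j$ as its unique fixed point, this forces $W\equiv 0$ and $Z\equiv 0$, giving both identities simultaneously.

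Concretely, fix $(j,n)\in\Z^2_\ge$ and set $W_{m,j}=U_{m,j}\hcA_{j,n}V_{n,n}$ for $m\ge j$, and
\[
Z_{m,j}=V_{m,j}\hcA_{j,n}V_{n,n}-\begin{cases}\hcA_{m,n}V_{n,n}, & n\le m\le j,\\ V_{m,n}, & m\le n,\end{cases}
\]
for $m\le j$; note that at the boundary $m=n$ both cases agree because $\hcA_{n,n}=\Id$. To see $(W,Z)\in\Omg_j$, I bound $\|W_{m,j}\|\le\sigma a_{m,j}\|\hcA_{j,n}V_{n,n}\|$ via~\eqref{ine:||U_m,n||}, which gives $W\in\Omg^+_j$ since $\|\hcA_{j,n}V_{n,n}\|$ is just a finite constant depending on the fixed $(j,n)$. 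For $Z\in\Omg^-_j$, I use~\eqref{ine:||V_m,n||}; in the range $n\le m\le j$ there are only finitely many values of $m$, and in the range $m\le n$ the extra term $\|V_{m,n}\|\le\sigma b_{m,n}$ is absorbed into a multiple of $b_{m,j}$ by hypothesis~\eqref{hyp:sup_m_b_m,n/b_m,j<infty}.

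The substantive step is to show $T_j(W,Z)=(W,Z)$. I expand $W_{m,j}$ and $Z_{m,j}$ using the defining identities~\eqref{def:U_m,n} and~\eqref{def:V_m,n} for $U_{m,j}$ and $V_{m,j}$, substitute $V_{k,j}\hcA_{j,n}V_{n,n}=Z_{k,j}+(\text{correction})$ and $U_{k,j}\hcA_{j,n}V_{n,n}=W_{k,j}$, and verify that all non-$(W,Z)$ terms cancel to produce exactly $J_{m,j}(W,Z)$ and $L_{m,j}(W,Z)$. The cancellation is driven by two reduction identities that follow from~\ref{eq:(S1)}--\ref{eq:(S3)} together with the perturbation recursion~\eqref{eq:hAm,n=A_m,n+sum A_m,k B_k hA_k,n}:
\[
\cA_{m,j}P_j\hcA_{j,n}=\cA_{m,n}P_n+\sum_{k=n}^{j-1}C_{m,k}\hcA_{k,n},\qquad \cA_{m,j}Q_j\hcA_{j,n}=\cA_{m,n}Q_n+\sum_{k=n}^{j-1}D_{m,k}\hcA_{k,n},
\]
combined with the evaluation of~\eqref{def:V_m,n} at $m=n$ (which yields $P_nV_{n,n}=\sum_{k<n}C_{n,k}V_{k,n}$ and $Q_nV_{n,n}=Q_n+\sum_{k\ge n}D_{n,k}U_{k,n}$) and with the elementary observation that $C_{m,k}+D_{m,k}=\cA_{m,k+1}B_k$.

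The main obstacle is the bookkeeping in the range $n\le m\le j$ of the $Z$-equation: there one has to reassemble $\hcA_{m,n}V_{n,n}$ from its recursion $\hcA_{m,n}V_{n,n}=\cA_{m,n}V_{n,n}+\sum_{k=n}^{m-1}\cA_{m,k+1}B_k\hcA_{k,n}V_{n,n}$ and match it against the sum of $\cA_{m,j}Q_j\hcA_{j,n}V_{n,n}$ with the surviving convolutional tails of $L_{m,j}$, which in turn requires splitting several sums at $k=n$ and $k=m$ and using the identity $C_{m,k}+D_{m,k}=\cA_{m,k+1}B_k$ to merge the $P$- and $Q$-pieces on $n\le k\le m-1$. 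Once this identity is verified, uniqueness of the fixed point of $T_j$ closes the argument.
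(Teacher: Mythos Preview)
Your proposal is correct and follows essentially the same approach as the paper: the same defect pair $(W,Z)$, the same membership argument in $\Omg_j$ (invoking~\eqref{hyp:sup_m_b_m,n/b_m,j<infty} for the tail $m\le n$), and the same verification that $(W,Z)$ is a fixed point of the linear contraction $T_j$ via the reduction identities for $\cA_{m,j}P_j\hcA_{j,n}$ and $\cA_{m,j}Q_j\hcA_{j,n}$ and the evaluation of~\eqref{def:V_m,n} at $m=n$. The paper carries out precisely the bookkeeping you anticipate in the range $n\le m\le j$, splitting the sums at $k=n$ and $k=m$ and using $C_{m,k}+D_{m,k}=\cA_{m,k+1}B_k$ to recombine the pieces into $\hcA_{m,n}V_{n,n}$.
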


\begin{proof}
   Let $\prts{j, n} \in \Z^2_\ge$ and define
      $$ W_{m,j} = U_{m,j} \hcA_{j,n} V_{n,n}, \ \ m \ge j,$$
   and
      $$ Z_{m,j} =
         \begin{cases}
            V_{m,j} \hcA_{j,n} V_{n,n} -  \hcA_{m,n} V_{n,n}
               & \text{ if } n \le m \le j,\\
            V_{m,j} \hcA_{j,n} V_{n,n} -  V_{m,n}
               & \text{ if } m \le n.\\
         \end{cases}.$$
   Putting $W = \prts{W_{m,j}}_{m \ge j}$ and $Z = \prts{Z_{m,j}}_{m \le j}$ it is clear that $\prts{W,Z} \in \Omg_j$.

   For every $m \ge j$, it follows from~\eqref{def:U_m,n} that
   \begin{equation}\label{eq:U_m,j hA_j,n V_n,n=...}
      U_{m,j} \hcA_{j,n} V_{n,n}
      = \cA_{m,j} P_j \hcA_{j,n} V_{n,n} + J_{m,j} (U,V) \hcA_{j,n} V_{n,n}.
   \end{equation}
   Because from~\eqref{eq:hAm,n=A_m,n+sum A_m,k B_k hA_k,n} and~\eqref{def:V_m,n} we have
   \begin{equation}\label{eq:A_m,j P_j hA_j,n V_n,n=...}
      \begin{split}
         \cA_{m,j} P_j \hcA_{j,n} V_{n,n}
         & = \cA_{m,n} P_n V_{n,n}
            + \dsum_{k=n}^{j-1} \cA_{m,k+1} P_{k+1} B_k \ \hcA_{k,n} V_{n,n}\\
         & = \dsum_{k=-\infty}^{n-1} C_{m,k} V_{k,n}
            + \dsum_{k=n}^{j-1} C_{m,k} \ \hcA_{k,n} V_{n,n},
      \end{split}
   \end{equation}
   and from~\eqref{def:J_m,n(W,Z)} we obtain
   \begin{equation}\label{eq:J_m,j(U,V) hA_j,n V_n,n=...}
      \begin{split}
         & J_{m,j} (U,V) \hcA_{j,n} V_{n,n}\\
         & = - \dsum_{k=-\infty}^{j-1} C_{m,k} V_{k,j} \hcA_{j,n} V_{n,n}
            + \dsum_{k=j}^{m-1} C_{m,k} U_{k,j} \hcA_{j,n} V_{n,n}
            - \dsum_{k=m}^{+\infty} D_{m,k} U_{k,j} \hcA_{j,n} V_{n,n},
      \end{split}
   \end{equation}
   by~\eqref{eq:U_m,j hA_j,n V_n,n=...}, \eqref{eq:A_m,j P_j hA_j,n V_n,n=...} and~\eqref{eq:J_m,j(U,V) hA_j,n V_n,n=...} we can conclude that
   \begin{align*}
      W_{m,j}
      & = U_{m,j} \hcA_{j,n} V_{n,n}\\
      & = - \dsum_{k=-\infty}^{j-1} C_{m,k} Z_{k,j}
         + \dsum_{k=j}^{m-1} C_{m,k} W_{k,j}
         - \dsum_{k=m}^{+\infty} D_{m,k} W_{k,j}\\
%      & = - \dsum_{k=-\infty}^{n-1} C_{m,k}
 %        \prts{V_{k,n} \hcA_{j,n} V_{n,n} - V_{k,n}}
  %       - \dsum_{k=n}^{j-1} C_{m,k}
   %      \prts{V_{k,n} \hcA_{j,n} V_{n,n} - \hcA_{k,n}V_{n,n}}\\
    %  & \qquad+ \dsum_{k=j}^{m-1} C_{m,k} U_{k,n} \hcA_{j,n} V_{n,n}
     %    - \dsum_{k=m}^{+\infty} D_{m,k} U_{k,n} \hcA_{j,n} V_{n,n}\\
      & = J_{m,j}(W,Z).
   \end{align*}

   On the other hand, for $m \le j$, from~\eqref{def:V_m,n} we have
   \begin{equation} \label{eq:V_m,j_hcA_j,n_V_n,n}
      V_{m,j} \hcA_{j,n} V_{n,n}
      = A_{m,j} Q_j \hcA_{j,n} V_{n,n} + L_{m,j}(U,V) \hcA_{j,n} V_{n,n},
   \end{equation}
   from~\eqref{eq:hAm,n=A_m,n+sum A_m,k B_k hA_k,n} it follows that
   \begin{equation}\label{eq:A_m,j_Q_j_hcA_j,n_V_n,n}
      \begin{split}
         A_{m,j} Q_j \hcA_{j,n} V_{n,n}
         & = A_{m,n} Q_n V_{n,n}
            + \dsum_{k=n}^{j-1} \cA_{m,k+1} Q_{k+1} B_k \hcA_{k,n} V_{n,n}\\
%         & = A_{m,n} Q_n + \dsum_{k=n}^{+\infty} D_{m,k} U_{k,n}
 %           + \dsum_{k=n}^{j-1} D_{m,k} \hcA_{k,n} V_{n,n}.
      \end{split}
   \end{equation}
   and by~\eqref{def:L_m,n(W,Z)} we get
   \begin{equation}\label{eq:Lm,j(U,V) hA_j,n V_n,n=...}
      \begin{split}
         & L_{m,j}(U,V) \hcA_{j,n} V_{n,n}\\
         & = \dsum_{k=-\infty}^{m-1} C_{m,k} V_{k,j} \hcA_{j,n} V_{n,n}
            - \dsum_{k=m}^{j-1} D_{m,k} V_{k,j} \hcA_{j,n} V_{n,n}
            + \dsum_{k=j}^{+\infty} D_{m,k} U_{k,j} \hcA_{j,n} V_{n,n}.
      \end{split}
   \end{equation}

   If $n \le m \le j$, from~\eqref{eq:A_m,j_Q_j_hcA_j,n_V_n,n},~\eqref{eq:hAm,n=A_m,n+sum A_m,k B_k hA_k,n} and~\eqref{def:V_m,n} we have
   \begin{equation*}%\label{eq:A_m,j_Q_j_hcA_j,n_V_n,n_-_hcA_m,n_V_n,n}
      \begin{split}
         & \cA_{m,j} Q_j \hcA_{j,n} V_{n,n} - \hcA_{m,n} V_{n,n}\\
         & = \prts{A_{m,n} Q_n
            + \dsum_{k=n}^{j-1} \cA_{m,k+1} Q_{k+1} B_k \hcA_{k,n}
            - \cA_{m,n}
            - \sum_{k=n}^{m-1} \cA_{m,k+1} B_k \hcA_{k,n}} V_{n,n}\\
         & = \prts{-\cA_{m,n} P_n
               - \dsum_{k=n}^{m-1} \cA_{m,k+1} P_{k+1} B_k \hcA_{k,n}
               + \dsum_{k=m}^{j-1} \cA_{m,k+1} Q_{k+1} B_k \hcA_{k,n}} V_{n,n}\\
         & = -\cA_{m,n} P_n V_{n,n}
               - \dsum_{k=n}^{m-1} \cA_{m,k+1} P_{k+1} B_k \hcA_{k,n} V_{n,n}
               + \dsum_{k=m}^{j-1} \cA_{m,k+1} Q_{k+1} B_k \hcA_{k,n} V_{n,n}\\
         & = -\dsum_{k=- \infty}^{n-1} C_{m,k} V_{k,n}
            - \dsum_{k=n}^{m-1} C_{m,k} \hcA_{k,n} V_{n,n}
            + \dsum_{k=m}^{j-1} D_{m,k} \hcA_{k,n} V_{n,n},
      \end{split}
   \end{equation*}
   and this together with~\eqref{eq:V_m,j_hcA_j,n_V_n,n} and~\eqref{eq:Lm,j(U,V) hA_j,n V_n,n=...} imply
   \begin{align*}
      Z_{m,j}
      & = V_{m,j} \hcA_{j,n} V_{n,n} - \hcA_{m,n} V_{n,n}\\
      & = \dsum_{k=-\infty}^{m-1} C_{m,k} Z_{k,j}
         - \dsum_{k=m}^{j-1} D_{m,k} Z_{k,j}
         + \dsum_{k=j}^{+\infty} D_{m,k} W_{k,j}\\
      & = L_{m,j}(W,Z),
   \end{align*}
   for $n \le m \le j$.

   If $m \le n \le j$, by~\eqref{eq:V_m,j_hcA_j,n_V_n,n},~\eqref{eq:A_m,j_Q_j_hcA_j,n_V_n,n} and~\eqref{def:V_m,n} we have
   \begin{align*}
      \cA_{m,j} Q_j \hcA_{j,n} V_{n,n} - V_{m,n}
      & = A_{m,n} Q_n V_{n,n}
         + \dsum_{k=n}^{j-1} \cA_{m,k+1} Q_{k+1} B_k \hcA_{k,n} V_{n,n}
         - V_{m,n}\\
      & = A_{m,n} Q_n + \dsum_{k=n}^{+\infty} D_{m,k} U_{k,n}
            + \dsum_{k=n}^{j-1} D_{m,k} \hcA_{k,n} V_{n,n}
            - V_{m,n}\\
      & = \dsum_{k=n}^{j-1} D_{m,k} \hcA_{k,n} V_{n,n}
         - \dsum_{k=-\infty}^{m-1} C_{m,k} V_{k,n}
         - \dsum_{k=m}^{n-1} D_{m,k} V_{k,n}
   \end{align*}
   and this together with~\eqref{eq:V_m,j_hcA_j,n_V_n,n} implies
   \begin{align*}
      Z_{m,j}
      & = V_{m,j} \hcA_{j,n} V_{n,n} - V_{m,n}\\
      & = \dsum_{k=-\infty}^{m-1} C_{m,k} Z_{k,j}
         - \dsum_{k=m}^{j-1} D_{m,k} Z_{k,j}
         + \dsum_{k=j}^{+\infty} D_{m,k} W_{k,j}\\
      & = L_{m,j}(W,Z).
   \end{align*}

   Therefore, $(W,Z)$ is a fixed point of $T_j$ and this implies $W_{m,j} = 0$ for every $m \ge j$ and $Z_{m,j} = 0$ for every $m \le j$.
\end{proof}

Define $\widehat{E}_n = U_{n,n}(X)$ and $\widehat{F}_n = V_{n,n} (X)$.

\begin{lemma}\label{lemma:inclusions}
   For every $\prts{m,n} \in \Z^2_\ge$ we have
      $$ \hcA_{m,n}(\widehat E_n) \subseteq \widehat E_m
         \ \ \ \text{ and } \ \ \
         \hcA_{m,n}(\widehat F_n) \subseteq \widehat F_m.$$
\end{lemma}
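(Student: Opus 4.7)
My plan is to derive both inclusions as immediate corollaries of the algebraic identities already established in Lemma~\ref{lemma:Um,jUj,n=Um,n} and Lemma~\ref{lemma:U_m,j_hcA_j,n_V_n,n_=_0}, after a single observation that links $U_{m,n}$ to the perturbed cocycle $\hcA_{m,n}$. Specifically, since Lemma~\ref{lemma:U_m,n+Vm,n_solutions} says $(U_{m,n})_{m \ge n}$ is a solution of~\eqref{eq:x_m+1=(A_m+B_m)x_m, m in Z} with starting value $U_{n,n}$ at time $n$, uniqueness of forward iteration gives the identification
\[
   U_{m,n} = \hcA_{m,n}\, U_{n,n} \qquad \text{for every } (m,n) \in \Z^2_\ge.
\]

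For the first inclusion, take $y \in \widehat E_n$, so $y = U_{n,n} x$ for some $x \in X$. Using the identification above and then the semigroup-type relation from Lemma~\ref{lemma:Um,jUj,n=Um,n} specialized at $j = m$ (which yields $U_{m,m} U_{m,n} = U_{m,n}$), I would write
\[
   \hcA_{m,n}\, y = \hcA_{m,n}\, U_{n,n}\, x = U_{m,n}\, x = U_{m,m}\prts{U_{m,n}\, x} \in U_{m,m}(X) = \widehat E_m,
\]
which proves $\hcA_{m,n}(\widehat E_n) \subseteq \widehat E_m$.

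For the second inclusion, I would invoke Lemma~\ref{lemma:U_m,j_hcA_j,n_V_n,n_=_0} with $j = m$. In that lemma, the case $n \le m \le j$ with $j = m$ gives exactly $V_{m,m} \hcA_{m,n} V_{n,n} = \hcA_{m,n} V_{n,n}$. Thus for any $y = V_{n,n} x \in \widehat F_n$,
\[
   \hcA_{m,n}\, y = \hcA_{m,n}\, V_{n,n}\, x = V_{m,m}\prts{\hcA_{m,n}\, V_{n,n}\, x} \in V_{m,m}(X) = \widehat F_m,
\]
establishing $\hcA_{m,n}(\widehat F_n) \subseteq \widehat F_m$.

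I do not expect any real obstacle: the technical content is entirely packed into the preceding three lemmas, so the present lemma is a bookkeeping exercise. The only subtle point worth double-checking is the equality $U_{m,n} = \hcA_{m,n} U_{n,n}$, which is what forces the forward orbit of $U_{n,n} x$ under the perturbed dynamics to coincide with the already-controlled sequence $(U_{m,n} x)_{m \ge n}$, and correspondingly identifying the right special case ($j = m$) of the two preceding algebraic lemmas.
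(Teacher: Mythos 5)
Your proof is correct and takes essentially the same route as the paper: both establish the identification $U_{m,n} = \hcA_{m,n} U_{n,n}$ from Lemma~\ref{lemma:U_m,n+Vm,n_solutions} and then apply Lemma~\ref{lemma:Um,jUj,n=Um,n} with $j=m$ for the $\widehat E$-inclusion, and Lemma~\ref{lemma:U_m,j_hcA_j,n_V_n,n_=_0} with $j=m$ (the case $n\le m\le j$) for the $\widehat F$-inclusion. No issues.
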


\begin{proof}
   Let $x \in \widehat E_n$. Since $U_{n,n} U_{n,n} = U_{n,n}$, we have $x = U_{n,n} x$. Hence by Lemmas~\ref{lemma:U_m,n+Vm,n_solutions} and~\ref{lemma:Um,jUj,n=Um,n} we get
      $$ \hcA_{m,n} x
         = \hcA_{m,n} U_{n,n} x
         = U_{m,n} x
         = U_{m,m} U_{m,n} x \in \widehat E_m$$
   and this proves the first inclusion.

   Let $x \in \widehat F_n$. Since $x=V_{n,n}x$ we have from Lemma~\ref{lemma:U_m,j_hcA_j,n_V_n,n_=_0}
      $$ \hcA_{m,n} x
         = \hcA_{m,n} V_{n,n} x
 %        = V_{m,m} \hcA_{m,n} V_{n,n} x + U_{m,m} \hcA_{m,n} V_{n,n} x
         = V_{m,m} \hcA_{m,n} V_{n,n} x \in \widehat F_m$$
   and the second inclusion is proved.
\end{proof}

\begin{lemma}\label{lemma:invertibility}
   For every $\prts{m,n} \in \Z^2_\ge$, the restriction $\hcA_{m,n}|_{\widehat F_n} \colon  \widehat F_n \to \widehat F_m$ is invertible and its inverse is the restriction $V_{n,m}|_{\widehat F_m} \colon \widehat F_m \to \widehat F_n$.
\end{lemma}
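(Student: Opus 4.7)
The plan is to exhibit $V_{n,m}|_{\widehat{F}_m}$ as the two-sided inverse of $\hcA_{m,n}|_{\widehat{F}_n}$, using only the lemmas already proved together with the forward determinism of the perturbed equation.

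First I would verify the mapping properties. Lemma~\ref{lemma:inclusions} already gives $\hcA_{m,n}(\widehat{F}_n) \subseteq \widehat{F}_m$. In the opposite direction, applying Lemma~\ref{lemma:Vm,jVj,n=Vm,n} with its three indices specialized to $n \le n \le m$ yields $V_{n,n} V_{n,m} = V_{n,m}$, so $V_{n,m}(X) \subseteq V_{n,n}(X) = \widehat{F}_n$; boundedness of this restriction is contained in Lemma~\ref{lemma:U_m,n_&_V_m,n}.

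Next I would check that $V_{n,m}\hcA_{m,n} = \Id$ on $\widehat{F}_n$. Any $x \in \widehat{F}_n$ can be written $x = V_{n,n} x$ because $V_{n,n}$ is idempotent (the case $m = j = n$ of Lemma~\ref{lemma:Vm,jVj,n=Vm,n}). Then Lemma~\ref{lemma:U_m,j_hcA_j,n_V_n,n_=_0}, with its outermost index equal to $n$---which sits in the range $n \le n \le m$, placing us in the first of the two cases stated there---gives $V_{n,m}\hcA_{m,n} V_{n,n} = \hcA_{n,n} V_{n,n} = V_{n,n}$, hence $V_{n,m}\hcA_{m,n} x = x$.

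For the reverse identity $\hcA_{m,n} V_{n,m} = \Id$ on $\widehat{F}_m$, I would avoid any further fixed-point work and rely instead on the forward determinism of~\eqref{eq:x_m+1=(A_m+B_m)x_m, m in Z}. Fix $y \in \widehat{F}_m$ so that $y = V_{m,m} y$, and compare the two sequences $\xi_{m'} = \hcA_{m',n} V_{n,m} y$ for $m' \ge n$ and $\eta_{m'} = V_{m',m} y$ for $m' \le m$. Each solves the perturbed equation on the overlap $n \le m' \le m$: $\xi$ because $\hcA_{\cdot,n}$ is the forward evolution operator, and $\eta$ by Lemma~\ref{lemma:U_m,n+Vm,n_solutions}. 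The two sequences coincide at $m' = n$, both equal to $V_{n,m} y$. Since the recursion $x_{m'+1} = (A_{m'} + B_{m'}) x_{m'}$ uniquely determines $x_{m'+1}$ from $x_{m'}$, they agree throughout $[n,m]$; evaluating at $m' = m$ gives $\hcA_{m,n} V_{n,m} y = V_{m,m} y = y$. I expect this third step to be the real conceptual content of the argument---recognizing that $V_{\cdot,m}$ is a forward solution that can be matched to $\hcA_{\cdot,n} V_{n,m} y$ at time~$n$---since everything else is a direct translation of the earlier lemmas.
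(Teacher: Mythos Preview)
Your proof is correct, but it takes a noticeably different route from the paper's, and in fact a shorter one.

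The paper establishes the identity $\hcA_{m,n} V_{n,m} = V_{m,m}$ in one line (this is precisely your step~3, using that $(V_{m',m})_{m' \le m}$ is a forward solution), which gives surjectivity immediately. It then spends the bulk of the proof on \emph{injectivity}: assuming $\hcA_{m,n} y = 0$ for some $y \in \widehat F_n$, it sets up an auxiliary quantity
\[
   \eta = \sup\Bigl\{\sup_{j \le n} \|V_{j,n}y\|/b_{j,n},\ \max_{n \le j < m} \|\hcA_{j,n}y\|/a_{j,n}\Bigr\},
\]
and by manipulating the integral equations~\eqref{def:U_m,n} and~\eqref{def:V_m,n} shows $\eta \le \maxs{\lbd,\mu}\,\eta$, forcing $\eta = 0$ and hence $y = 0$. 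This is essentially a bespoke reprise of the contraction argument.

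You bypass that entirely. Your step~2 obtains the \emph{left} inverse identity $V_{n,m}\hcA_{m,n}V_{n,n} = V_{n,n}$ directly from Lemma~\ref{lemma:U_m,j_hcA_j,n_V_n,n_=_0} by specializing the free index to $n$, which is legitimate and yields injectivity for free. The fixed-point work behind that lemma is thus recycled rather than redone. One small remark: your own editorial comment that step~3 is ``the real conceptual content'' is slightly misplaced---the paper treats your step~3 as the trivial part; it is your step~2, leaning on Lemma~\ref{lemma:U_m,j_hcA_j,n_V_n,n_=_0}, that genuinely shortens the argument relative to the original.
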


\begin{proof}
   Since
      $$ \hcA_{m,n} V_{n,m} = V_{m,m}$$
   it follows that for every $x \in \widehat F_m$ we have
      $$ \hcA_{m,n} V_{n,m} x = V_{m,m} x = x$$
   and this prove that $\hcA_{m,n}|_{\widehat F_n} \colon  \widehat F_n \to \widehat F_m$ is surjective.

   To prove that $\hcA_{m,n}|_{\widehat F_n} \colon  \widehat F_n \to \widehat F_m$ is injective, we only have to show that if $\hcA_{m,n} y = 0$ for some $y \in \widehat F_n$, then $y=0$. Suppose that $y \in F_n$ and $\hcA_{m,n} y=0$. From~\eqref{eq:hAm,n=A_m,n+sum A_m,k B_k hA_k,n} we have
      $$ \cA_{m,n} y
         = - \dsum_{k = n}^{m-1} \cA_{m,k+1} B_k \hcA_{k,n} y$$
   and this implies
   \begin{equation}\label{eq:A_m,n Q_n y =...:hA_m,n y=0}
      \cA_{m,n} Q_n y
      = - \dsum_{k = n}^{m-1} \cA_{m,k+1} Q_{k+1} B_k \hcA_{k,n} y.
   \end{equation}
   Since $\cA_{m,n}$ is invertible in $F_n$, from~\eqref{eq:A_m,n Q_n y =...:hA_m,n y=0} we have
   \begin{equation}\label{eq:Qny=...:hA_m,ny=0}
      Q_n y
      = - \dsum_{k = n}^{m-1} \cA_{n,k+1} Q_{k+1} B_k \hcA_{k,n} y
      = - \dsum_{k = n}^{m-1} D_{n,k} \hcA_{k,n}y
   \end{equation}
   and it follows from~\eqref{def:V_m,n}, Lemma~\ref{lemma:Vm,jVj,n=Vm,n} and~\eqref{eq:Qny=...:hA_m,ny=0} that
   \begin{equation}\label{eq:y=...:A_m,n y=0}
      y
      = V_{n,n}y
      = Q_n y + \dsum_{k=-\infty}^{n-1} C_{n,k} V_{k,n} y
      = \dsum_{k=-\infty}^{n-1} C_{n,k} V_{k,n} y
      - \dsum_{k = n}^{m-1} D_{n,k} \hcA_{k,n} y.
   \end{equation}
   Let
      $$ \eta
         = \sups{\sup_{j \le n} \dfrac{\|V_{j,n}y\|}{b_{j,n}},
            \max_{n \le j < m} \dfrac{\|\hcA_{j,n}y\|}{a_{j,n}}}.$$
   From $\|V_{j,n}y\| \le \sigma b_{j,n} \|y\|$ for every $\prts{j,n} \in \Z^2_\le$, it follows that $\eta < +\infty$. By~\eqref{def:V_m,n}, Lemma \ref{lemma:Vm,jVj,n=Vm,n} and~\eqref{eq:Qny=...:hA_m,ny=0}, we have for every $j \le n$
   \begin{align*}
      V_{j,n} y
      & = A_{j,n} Q_n y + \dsum_{k=- \infty}^{j-1} C_{j,k} V_{k,n} y
         - \dsum_{k=j}^{n-1} D_{j,k} V_{k,n} y,\\
      & = \dsum_{k=- \infty}^{j-1} C_{j,k} V_{k,n} y
         - \dsum_{k=j}^{n-1} D_{j,k} V_{k,n} y
         - \dsum_{k=n}^{m-1} D_{j,k} \hcA_{k,n} y
   \end{align*}
   and this implies
   \begin{equation}\label{ine:||Vj,n_y||:hA_m,n y=0}
      \begin{split}
         \|V_{j,n} y\|
         & \le \dsum_{k=- \infty}^{j-1} \|C_{j,k}\| b_{k,n} \eta
            + \dsum_{k=j}^{n-1} \|D_{j,k}\| b_{k,n} \eta
            + \dsum_{k=n}^{m-1} \|D_{j,k}\| a_{k,n} \eta\\
         & \le \mu_{j,n} \eta\\
         & \le \mu \, b_{j,n} \eta.
      \end{split}
   \end{equation}
   On the other hand, for $n \le j < m$, from~\eqref{eq:hAm,n=A_m,n+sum A_m,k B_k hA_k,n} and \eqref{eq:y=...:A_m,n y=0} we have
   \begin{align*}
      \hcA_{j,n} y
      & = \cA_{j,n} y + \dsum_{k=n}^{j-1} \cA_{j,k+1} B_k \hcA_{k,n}y\\
      & = \dsum_{k=-\infty}^{n-1} C_{j,k} V_{k,n} y
         - \dsum_{k=n}^{m-1} D_{j,k} \hcA_{k,n}y
         + \dsum_{k=n}^{j-1} \cA_{j,k+1} B_k \hcA_{k,n}y\\
      & = \dsum_{k=-\infty}^{n-1} C_{j,k} V_{k,n} y
         + \dsum_{k=n}^{j-1} C_{j,k} \hcA_{k,n}y
         - \dsum_{k=j}^{m-1} D_{j,k} \hcA_{k,n}y,
   \end{align*}
   and thus
   \begin{equation}\label{ine:||hAj,n_y||:A_m,n y=0}
      \begin{split}
         \|\hcA_{j,n} y\|
         & \le \dsum_{k=-\infty}^{n-1} \|C_{j,k}\| b_{k,n} \eta
            + \dsum_{k=n}^{j-1} \|C_{j,k}\| a_{k,n} \eta
            - \dsum_{k=j}^{m-1} \|D_{j,k}\| a_{k,n} \eta\\
         & \le \lbd_{j,n} \eta\\
         & \le \lbd \, a_{j,n} \eta.
      \end{split}
   \end{equation}
   From~\eqref{ine:||Vj,n_y||:hA_m,n y=0} and~\eqref{ine:||hAj,n_y||:A_m,n y=0}, we have
      $$ \eta \le \maxs{\lbd,\mu} \eta,$$
   and because $\maxs{\lbd,\mu} < 1,$ we must have $\eta = 0$. Hence
      $$ \dfrac{\|\hcA_{j,j}y\|}{a_{j,j}} = 0$$
   and this implies $y = \hcA_{j,j} y =0$.
\end{proof}

Now we are in conditions to prove Theorem~\ref{thm:robustness:Z}. Let $\widehat P_n = U_{n,n}$ and $\widehat Q_n = V_{n,n}$. By~\eqref{def:U_m,n} and~\eqref{def:V_m,n} we have
   $$ \widehat P_n + \widehat Q_n = P_n + Q_n = \Id$$
and from Lemmas~\ref{lemma:Um,jUj,n=Um,n} and~\ref{lemma:Vm,jVj,n=Vm,n} it follows immediately that $\widehat P_n$ and $\widehat Q_n$ is are projections. Moreover, by Lemmas~\ref{lemma:Um,jUj,n=Um,n} and~\ref{lemma:U_m,j_hcA_j,n_V_n,n_=_0}, we have
\begin{align*}
   \hcA_{m,n} \widehat P_n
   & = \widehat P_m \hcA_{m,n} \widehat P_n
      + \widehat Q_m \hcA_{m,n} \widehat P_n \\
   & = \widehat P_m \hcA_{m,n}  - \widehat P_m \hcA_{m,n} \widehat Q_n
      + \widehat Q_m U_{m,n} \\
   & = \widehat P_m \hcA_{m,n},
\end{align*}
i.e.,~\ref{eq:(S1)} is verified. Conditions~\ref{eq:(S2)} and~\ref{eq:(S3)} are immediate consequences of Lemmas~\ref{lemma:inclusions} and~\ref{lemma:invertibility}, respectively. Moreover, we have from Lemmas \ref{lemma:U_m,n_&_V_m,n} and~\ref{lemma:U_m,n+Vm,n_solutions} that
   $$ \|\hcA_{m,n} \widehat P_n\|
      = \|U_{m,n}\|
      \le \sigma a_{m,n}$$
and
   $$ \|\prts{\hcA_{m,n}|_{\widehat F_n}}^{-1} \widehat Q_m\|
      = \|V_{n,m} V_{m,m}\|
      = \|V_{n,m}\|
      \le \sigma b_{n,m}$$
for every $\prts{m,n} \in \Z^2_\ge$. Therefore equation~\eqref{eq:x_m+1=(A_m+B_m)x_m, m in Z} admits a general dichotomy with bounds $\prts{\sigma a_{m,n}}$ and $\prts{\sigma b_{m,n}}$.

\section{Proof of Theorem~\ref{thm:robustness:N}} \label{section:proof:robustness:N}

Suppose that equation~\eqref{eq:x_m+1=A_m x_m, m in N} has a general dichotomy with bounds $\prts{a_{m,n}}_{(m,n) \in \N^2_\ge}$ and $\prts{b_{m,n}}_{(m,n) \in \N^2_\le}$ and define
   $$ \tilde A_m =
      \begin{cases}
         A_m & \text{ if } m \in \N,\\
         \Id & \text{ if } m \in \Z\setminus\N,
      \end{cases}
      \ \ \ \text{ and } \ \ \
      \tilde P_m =
      \begin{cases}
         P_m & \text{ if } m \in \N,\\
         P_1 & \text{ if } m \in \Z\setminus\N.
      \end{cases}$$
Hence, using the projections $\tilde P_m$, $m \in \Z$, equation
   $$ x_{m+1} = \tilde A_m x_m, \ m \in \Z$$
has a general dichotomy with bounds $\prts{\tilde a_{m,n}}_{\prts{m,n} \in \Z^2_\ge}$ and $\prts{\tilde b_{m,n}}_{\prts{m,n} \in \Z^2_\le}$, where
   $$ \tilde a_{m,n} =
      \begin{cases}
         a_{m,n} & \text{ if } m \ge n \ge 1,\\
         a_{m,1} & \text{ if } m \ge 1 \ge n,\\
         a_{1,1} & \text{ if } 1 \ge m \ge n,
      \end{cases}
      \ \ \ \text{ and } \ \ \
      \tilde b_{m,n} =
      \begin{cases}
         b_{m,n} & \text{ if } 1 \le m \le n,\\
         b_{1,n} & \text{ if } m \le 1 \le n,\\
         b_{1,1} & \text{ if } m \le n \le 1.
      \end{cases}$$
Since we are assuming~\eqref{hyp:sup_m_a_m,n/a_m,j<infty:N}, the bounds $(\tilde a_{m,n})$ and $(\tilde b_{m,n})$ satisfy~\eqref{hyp:sup_m_a_m,n/a_m,j<infty} and~\eqref{hyp:sup_m_b_m,n/b_m,j<infty}.

Computing $\lbd_{m,n}$ and $\mu_{m,n}$ (see~\eqref{def:lbd_m,n} and~\eqref{def:mu_m,n}) for equation
\begin{equation}\label{eq:x_m+1=(tilde A_m+ tilde B_m)x_m, m in Z}
   x_{m+1} = \prts{\tilde A_m + \tilde B_m} x_m, \ m \in \Z,
\end{equation}
with
   $$ \tilde B_m =
      \begin{cases}
         B_m & \text{ if } m \in \N,\\
         0 & \text{ if } m \in \Z\setminus\N,
      \end{cases}$$
we have
   $$ \lbd_{m,n} =
      \begin{cases}
         \lbd'_{m,n} & \text{ if } m \ge n \ge 1,\\
         \lbd'_{m,1} & \text{ if } m \ge 1 \ge n,\\
         \lbd'_{1,1} & \text{ if } 1 \ge m \ge n,
      \end{cases}
      \ \ \ \text{ and } \ \ \
      \mu_{m,n} =
      \begin{cases}
         \mu'_{m,n} & \text{ if } 1 \le m \le n,\\
         \mu'_{1,n} & \text{ if } m \le 1 \le n,\\
         \mu'_{1,1} & \text{ if } m \le n \le 1,
      \end{cases}$$
where the $\lbd'$s and $\mu'$s are given by~\eqref{def:lbd'_m,n} and~\eqref{def:mu'_m,n}. Moreover,
\begin{align*}
   & \maxs{\sup_{\prts{m,n} \in \Z^2_\ge} \dfrac{\lbd_{m,n}}{\tilde a_{m,n}},\
         \sup_{\prts{m,n} \in \Z^2_\le} \dfrac{\mu_{m,n}}{\tilde b_{m,n}}}\\
   & = \maxs{\sup_{\prts{m,n} \in \N^2_\ge} \dfrac{\lbd'_{m,n}}{a_{m,n}}, \
         \sup_{\prts{m,n} \in \N^2_\le} \dfrac{\mu'_{m,n}}{b_{m,n}}}\\
   & = \theta < 1.
\end{align*}
Hence equation~\eqref{eq:x_m+1=(tilde A_m+ tilde B_m)x_m, m in Z} satisfies all the hypothesis in Theorem \ref{thm:robustness:Z} and this implies that has a general dichotomy with bounds $(\sigma \tilde a_{m,n})$ and $(\sigma \tilde b_{m,n})$, where $\sigma = 1/(1-\theta)$. Therefore equation~\eqref{eq:x_m+1=(A_m+B_m)x_m, m in N} has a general dichotomy with bounds $(\sigma a_{m,n})$ and $(\sigma b_{m,n})$ and Theorem~\ref{thm:robustness:N} is proved.

\section*{Acknowledgments}
   This work was partially supported by FCT though Centro de Ma\-te\-m\'a\-ti\-ca da Universidade da Beira Interior (project PEst-OE/MAT/UI0212/2011).
%-----------------------------------------------------------------------------%
\bibliographystyle{elsart-num-sort}

\begin{thebibliography}{10}
\expandafter\ifx\csname url\endcsname\relax
  \def\url#1{\texttt{#1}}\fi
\expandafter\ifx\csname urlprefix\endcsname\relax\def\urlprefix{URL }\fi

\bibitem{Barreira-Fan-Valls-Zhang-TMNA-2011}
L.~Barreira, M.~Fan, C.~Valls, J.~Zhang, Robustness of nonuniform polynomial
  dichotomies for difference equations, Topol. Methods Nonlinear Anal. 37~(2)
  (2011) 357--376.

\bibitem{Barreira-Silva-Valls-JDE-2009}
L.~Barreira, C.~Silva, C.~Valls, Nonuniform behavior and robustness, J.
  Differential Equations 246~(9) (2009) 3579--3608.
\newline\urlprefix\url{http://dx.doi.org/10.1016/j.jde.2008.10.009}

\bibitem{Barreira-Valls-CMP-2005}
L.~Barreira, C.~Valls, Smoothness of invariant manifolds for nonautonomous
  equations, Comm. Math. Phys. 259~(3) (2005) 639--677.
\newline\urlprefix\url{http://dx.doi.org/10.1007/s00220-005-1380-z}

\bibitem{Barreira-Valls-JDE-2006-221}
L.~Barreira, C.~Valls, Stable manifolds for nonautonomous equations without
  exponential dichotomy, J. Differential Equations 221~(1) (2006) 58--90.
\newline\urlprefix\url{http://dx.doi.org/10.1016/j.jde.2005.04.005}

\bibitem{Barreira-Valls-JFA-2007}
L.~Barreira, C.~Valls, Conjugacies for linear and nonlinear perturbations of
  nonuniform behavior, J. Funct. Anal. 253~(1) (2007) 324--358.
\newline\urlprefix\url{http://dx.doi.org/10.1016/j.jfa.2007.05.007}

\bibitem{Barreira-Valls-JDE-2008}
L.~Barreira, C.~Valls, Robustness of nonuniform exponential dichotomies in
  {B}anach spaces, J. Differential Equations 244~(10) (2008) 2407--2447.
\newline\urlprefix\url{http://dx.doi.org/10.1016/j.jde.2008.02.028}

\bibitem{Barreira-Valls-NATMA-2010-72}
L.~Barreira, C.~Valls, Lyapunov sequences for exponential trichotomies,
  Nonlinear Anal. 72~(1) (2010) 192--203.
\newline\urlprefix\url{http://dx.doi.org/10.1016/j.na.2009.06.059}

\bibitem{Barreira-Valls-DCDS-A-2012-32-12}
L.~Barreira, C.~Valls, Noninvertible cocycles: robustness of exponential
  dichotomies, Discrete Contin. Dyn. Syst. 32~(12) (2012) 4111--4131.
\newline\urlprefix\url{http://dx.doi.org/10.3934/dcds.2012.32.4111}

\bibitem{Bento-Silva-JFA-2009}
A.~J.~G. Bento, C.~Silva, Stable manifolds for nonuniform polynomial
  dichotomies, J. Funct. Anal. 257~(1) (2009) 122--148.
\newline\urlprefix\url{http://dx.doi.org/10.1016/j.jfa.2009.01.032}

\bibitem{Bento-Silva-JDDE}
A.~J.~G. Bento, C.~M. Silva, Generalized nonuniform dichotomies and local
  stable manifolds, J. Dynam. Differential Equations, to appear, Preprint
  arXiv:1007.5039 [math.DS].
\newline\urlprefix\url{http://arxiv.org/abs/1007.5039}

\bibitem{Bento-Silva-2012-arXiv1209.6589B}
A.~J.~G. {Bento}, C.~M. {Silva}, {Nonuniform dichotomic behavior: Lipschitz
  invariant manifolds for difference equations},\ Preprint arXiv:1209.6589
  [math.DS],\ submitted.
\newline\urlprefix\url{http://arxiv.org/abs/1209.6589}

\bibitem{Bento-Silva-2012-arXiv1210.7740B}
A.~J.~G. {Bento}, C.~M. {Silva}, {Nonuniform dichotomic behavior: Lipschitz
  invariant manifolds for ODEs}, ArXiv e-prints,\ Preprint arXiv1210.7740
  [math.DS],\ submitted.
\newline\urlprefix\url{http://arxiv.org/abs/1210.7740}

\bibitem{Bento-Silva-NATMA-2012}
A.~J.~G. Bento, C.~M. Silva, Nonuniform {$(\mu,\nu)$}-dichotomies and local
  dynamics of difference equations, Nonlinear Anal. 75~(1) (2012) 78--90.
\newline\urlprefix\url{http://dx.doi.org/10.1016/j.na.2011.08.008}

\bibitem{Chang-Zhang-Qin-JMAA-2012}
X.~Chang, J.~Zhang, J.~Qin, Robustness of nonuniform {$(\mu,\nu)$}-dichotomies
  in {B}anach spaces, J. Math. Anal. Appl. 387~(2) (2012) 582--594.
\newline\urlprefix\url{http://dx.doi.org/10.1016/j.jmaa.2011.09.026}

\bibitem{Chow-Leiva-JDE-1995}
S.-N. Chow, H.~Leiva, Existence and roughness of the exponential dichotomy for
  skew-product semiflow in {B}anach spaces, J. Differential Equations 120~(2)
  (1995) 429--477.
\newline\urlprefix\url{http://dx.doi.org/10.1006/jdeq.1995.1117}

\bibitem{Chu-BSM-2013}
J.~Chu, Robustness of nonuniform behavior for discrete dynamics, Bull. Sci.
  Math., (2013).
\newline\urlprefix\url{http://dx.doi.org/10.1016/j.bulsci.2013.03.003}

\bibitem{Coppel-JDE-1967}
W.~A. Coppel, Dichotomies and reducibility, J. Differential Equations 3 (1967)
  500--521.
\newline\urlprefix\url{http://dx.doi.org/10.1016/0022-0396(67)90014-9}

\bibitem{Daleckii-Krein-TMM-1974}
J.~L. Dalec{\cprime}ki{\u\i}, M.~G. Kre{\u\i}n, Stability of solutions of
  differential equations in {B}anach space, American Mathematical Society,
  Providence, R.I., 1974, translated from the Russian by S. Smith, Translations
  of Mathematical Monographs, Vol. 43.

\bibitem{Jiang-EJDE-2012}
Y.~Jiang, Robustness of a nonuniform {$(\mu,\nu)$} trichotomy in {B}anach
  spaces, Electron. J. Differential Equations (2012) No. 154, 11.
\newline\urlprefix\url{http://ejde.math.unt.edu/Volumes/2012/154/jiang.pdf}

\bibitem{Li-AM-1934}
T.~Li, Die {S}tabilit\"atsfrage bei {D}ifferenzengleichungen, Acta Math. 63~(1)
  (1934) 99--141.
\newline\urlprefix\url{http://dx.doi.org/10.1007/BF02547352}

\bibitem{Massera-Schaffer-AM-1958}
J.~L. Massera, J.~J. Sch{\"a}ffer, Linear differential equations and functional
  analysis. {I}, Ann. of Math. (2) 67 (1958) 517--573.
\newline\urlprefix\url{http://www.jstor.org/stable/1969871}

\bibitem{Naulin-Pinto-NATMA-1994}
R.~Naulin, M.~Pinto, Dichotomies and asymptotic solutions of nonlinear
  differential systems, Nonlinear Anal. 23~(7) (1994) 871--882.
\newline\urlprefix\url{http://dx.doi.org/10.1016/0362-546X(94)90125-2}

\bibitem{Naulin-Pinto-JDE-1995}
R.~Naulin, M.~Pinto, Roughness of {$(h,k)$}-dichotomies, J. Differential
  Equations 118~(1) (1995) 20--35.
\newline\urlprefix\url{http://dx.doi.org/10.1006/jdeq.1995.1065}

\bibitem{Perron-MZ-1930}
O.~Perron, Die {S}tabilit\"atsfrage bei {D}ifferentialgleichungen, Math. Z.
  32~(1) (1930) 703--728.
\newline\urlprefix\url{http://dx.doi.org/10.1007/BF01194662}

\bibitem{Pliss-Sell-JDDE-1999}
V.~A. Pliss, G.~R. Sell, Robustness of exponential dichotomies in
  infinite-dimensional dynamical systems, J. Dynam. Differential Equations
  11~(3) (1999) 471--513.
\newline\urlprefix\url{http://dx.doi.org/10.1023/A:1021913903923}

\bibitem{Popescu-JMAA-2006}
L.~H. Popescu, Exponential dichotomy roughness on {B}anach spaces, J. Math.
  Anal. Appl. 314~(2) (2006) 436--454.
\newline\urlprefix\url{http://dx.doi.org/10.1016/j.jmaa.2005.04.011}

\bibitem{Preda-Megan-BAusMS-1983}
P.~Preda, M.~Megan, Nonuniform dichotomy of evolutionary processes in {B}anach
  spaces, Bull. Austral. Math. Soc. 27~(1) (1983) 31--52.
\newline\urlprefix\url{http://dx.doi.org/10.1017/S0004972700011473}

\end{thebibliography}
\def\cprime{$'$}

%-----------------------------------------------------------------------------%
\end{document}